\newtheorem{theorem}{Theorem}
\newtheorem{lemma}[theorem]{Lemma}
\newtheorem{proposition}[theorem]{Proposition}
\newtheorem*{definition*}{Definition}
\newcommand{\E}{\mathbb{E}}
\newcommand{\Inf}{\textnormal{Inf}}
\newcommand{\mea}{\mu}
\newcommand{\spa}{E}
\newcommand{\real}{\mathbf{R}}
\newcommand{\Var}{\textnormal{Var}}
\newcommand{\CD}{\textnormal{CD}}
\newcommand{\I}{\mathcal{I}}
\newcommand{\id}{\textnormal{id}}
\def\bal{\begin{align*}}
\def\eal{\end{align*}}
\author{Paata Ivanisvili}
\address{(P.~I.) Department of Mathematics, University of California, Irvine, CA 92617, USA}
\email{pivanisv@uci.edu}
\author{Haonan Zhang}
\address{(H.~Z.)  Department of Mathematics, University of South Carolina, Columbia, SC, 29201, USA}
\email{haonanzhangmath@gmail.com}
\title{On the Eldan--Gross inequality}
\thanks{}
\begin{document}

	\maketitle
	
	\begin{abstract}
		A recent discovery of Eldan and Gross states that there exists a universal $C>0$ such that for all Boolean functions $f:\{-1,1\}^n\to \{-1,1\}$,
		$$
		\int_{\{-1,1\}^n}\sqrt{s_f(x)}d\mu(x) \ge C\textnormal{Var}(f)\sqrt{\log \left(1+\frac{1}{\sum_{j=1}^{n}\textnormal{Inf}_j(f)^2}\right)}
		$$
		where $s_f(x)$ is the sensitivity of $f$ at $x$, $\textnormal{Var}(f)$ is the variance of $f$, $\textnormal{Inf}_j(f)$ is the influence of $f$ along the $j$-th variable, and $\mu$ is the uniform probability measure. In this note, we give an alternative proof that applies to biased discrete hypercube, and spaces having positive Ricci curvature lower bounds in the sense of Bakry and \'Emery. 

	\end{abstract}
	
	\bigskip
	
	{\footnotesize
		\noindent {\em 2020 Mathematics Subject Classification.} 68R05; 47D07.
		
		\noindent {\em Key words.} Boolean functions. Isoperimetric inequality. Hypercontractivity. Bakry--\'Emery curvature-dimension condition.}

	 \section{Introduction}

Let $f:\{-1,1\}^n\to \mathbb{R}$, and  denote 
\begin{equation*}
	D_j f(x):=\frac{f(x)-f(x^{\oplus j})}{2},\qquad 1\le j\le n, 
\end{equation*}
where $x^{\oplus j}$ means flipping the $j$-th variable of $x$. Put
\begin{equation*}
	|\nabla f|:=\left(\sum_{j=1}^{n}|D_j f|^2\right)^{1/2}.
\end{equation*}
For Boolean functions $f :\{-1,1\}^{n} \to \{-1,1\}$ the square of {\em discrete gradient} $|\nabla f(x)|^2$ coincides with the \emph{sensitivity} of $f$ at $x$:
\begin{equation*}
|\nabla f(x)|^2=s_f(x):=\sharp \{1\le j\le n: f(x)\neq f(x^{\oplus j})\}.
\end{equation*}
Recall that the \emph{influence} $\Inf_j (f)$ of a Boolean function $f:\{-1,1\}^n\to \{-1,1\}$ along the $j$-th variable is the probability of ``the value of $f$ changes when flipping $x_j$", or equivalently,
\begin{equation*}
	\Inf_j (f)=\E|f-\E_{x_j}f|^p, 
\end{equation*}
where $\E$ is defined with respect to the uniform distribution $\mu_n$, and $p>0$ is arbitrary. 
As usual, $\Var(f)$ is understood as the variance of $f$ with respect to the uniform distribution:
\begin{equation*}
	\Var(f)=\E|f-\E f|^2=\E f^2-(\E f)^2.
\end{equation*}
Note that for Boolean functions  $f:\{-1,1\}^n\to \{-1,1\}$:
\begin{equation}\label{eq:boolean l1 l2}
	\|f-\E f\|_1=4a(1-a)=\Var(f)\qquad \textnormal{ with }\qquad a=\mu_n(\{f=1\}).
\end{equation}
Here and in what follows $\|f \|_p$ denotes the $L^p$-norm with respect to the uniform probability measure unless otherwise stated.

Recently, Eldan and Gross \cite{eg22concentration} proved the following inequality.
\begin{theorem}[\cite{eg22concentration}]\label{thm:eg}
	There exists a universal constant $C>0$ such that for all $n\ge 1$ and all Boolean function $f:\{-1,1\}^n\to \{-1,1\}$ we have 
	\begin{equation}\label{ineq:eg}
	\E|\nabla f|\ge C \Var(f)\sqrt{\log\left(1+\frac{e}{\sum_{j=1}^{n}\Inf_j(f)^2}\right)}.
	\end{equation}
\end{theorem}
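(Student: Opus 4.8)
The plan is to trade the stochastic localization of \cite{eg22concentration} for a semigroup computation, arranged so that the Boolean hypothesis enters only through the elementary identities $\|D_jf\|_r^r=\Inf_j(f)$ $(r>0)$ and $|\widehat f(\{j\})|=|\E D_jf|\le\Inf_j(f)$; every other ingredient --- a positive spectral gap, hypercontractivity with the sharp parameter, and a first-order estimate for the heat flow --- holds with dimension-free constants on the $p$-biased cube and on any Markov triple satisfying $\mathrm{CD}(\kappa,\infty)$ with $\kappa>0$, which is what yields the announced extension.

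\textbf{Step 1 (a variational lower bound for $\E|\nabla f|$).} Let $(P_t)_{t\ge0}$ be the heat semigroup (on the uniform cube, $P_t=T_{e^{-t}}$), $L$ its generator, and $\mathcal E(g,h)=\E\sum_jD_jg\,D_jh$ the Dirichlet form. For a parameter $T>0$ set $g_T:=\int_0^T(P_sf-\E f)\,ds$, so that $-Lg_T=(I-P_T)f$ and $D_jg_T=\int_0^TP_sD_jf\,ds$. Using $D_j^*=D_j$, the pointwise inequality $\sum_jD_jf\,D_jg_T\le|\nabla f|\,|\nabla g_T|$, and the spectral gap $\lambda_1=1$ of $P_t$,
\begin{equation*}
(1-e^{-T})\,\Var(f)\le\sum_{S\ne\emptyset}(1-e^{-T|S|})\,\widehat f(S)^2=\langle f,(I-P_T)f\rangle=\E\sum_jD_jf\,D_jg_T\le\E|\nabla f|\cdot\|\,|\nabla g_T|\,\|_\infty .
\end{equation*}
Hence $\E|\nabla f|\ge(1-e^{-T})\Var(f)\big/\|\,|\nabla g_T|\,\|_\infty$ for every $T>0$, and \eqref{ineq:eg} will follow once we prove $\|\,|\nabla g_T|\,\|_\infty\le C/\sqrt T$ for a choice $T\asymp\log\!\big(1+e/\sum_j\Inf_j(f)^2\big)$.

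\textbf{Step 2 (the key estimate).} Since $D_jf$ is odd in the $j$th coordinate, there is the cancellation identity $P_sD_jf=e^{-s}\,\widetilde P^{(j)}_sD_jf$, where $\widetilde P^{(j)}_s$ is the noise operator acting on all coordinates except the $j$th; moreover $\big|\widetilde P^{(j)}_sD_jf\big|\le P_s\mathbf{1}_{A_j}$ with $A_j:=\{D_jf\ne0\}$ and $\mu(A_j)=\Inf_j(f)$. Hence
\begin{equation*}
\big|\nabla g_T(x)\big|^2\le\sum_j\Big(\int_0^T e^{-s}\,P_s\mathbf{1}_{A_j}(x)\,ds\Big)^2 .
\end{equation*}
The factor $e^{-s}$ --- whose role in the general setting is played by the Bakry--\'Emery gradient estimate $\Gamma(P_sf)\le e^{-2\kappa s}P_s\Gamma(f)$ --- makes the $s$-integral converge, while the decay of $P_s\mathbf{1}_{A_j}$ towards $\mu(A_j)=\Inf_j(f)$, quantified by hypercontractivity because $\mathbf{1}_{A_j}$ is an indicator, is what produces the $\ell^2$-sum $\sum_j\Inf_j(f)^2$ after squaring and summing in $j$ (the level-$1$ Fourier mass being folded in through $\sum_j\widehat f(\{j\})^2\le\sum_j\Inf_j(f)^2$). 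Splitting the $s$-integral at a threshold of order $\log(1+e/\sum_j\Inf_j(f)^2)$ and optimizing $T$ should yield $\|\,|\nabla g_T|\,\|_\infty^2\le C/T$; the estimate is sharp, with the tribes function extremal.

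\textbf{Generalization, and the main obstacle.} On the $p$-biased cube the argument is run verbatim with the biased noise operator and the (dimension-free) biased hypercontractivity; on a $\mathrm{CD}(\kappa,\infty)$ space with $\kappa>0$ one replaces the cancellation identity by the gradient estimate above, the Boolean derivatives $D_jf$ by a truncation of $|\nabla f|$ at a level dictated by $\sum_j\Inf_j(f)^2$, and the $L^p$ identities for $D_jf$ by the corresponding bounds for the truncated field, Step 1 being untouched since it uses only a positive spectral gap. The hard part will be Step 2: extracting from the display above an $L^\infty$ bound on $\nabla g_T$ that actually \emph{decays} like $1/\sqrt T$, rather than merely staying bounded (which would only reprove $\E|\nabla f|\gtrsim\Var(f)$). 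This forces hypercontractivity to be used at precisely tuned exponents, so as to convert $L^\infty$ control of $\nabla g_T$ into $\sum_j\Inf_j(f)^2$ --- exploiting that $D_jf\in\{-1,0,1\}$, whence all of its moments collapse to $\Inf_j(f)$ --- and it is the only place where the Boolean hypothesis, as opposed to $\|f\|_\infty\le1$, is genuinely needed. An additional subtlety is the uniform-in-$x$ control of the short-time part of the $s$-integral, where the pointwise sensitivity $s_f(x)$ may be large; on the set where $s_f$ is large the influences entering the sum are themselves small, so the hypercontractive decay sets in early, but making this quantitative is the technical crux.
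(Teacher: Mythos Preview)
Your Step~1 is correct: pairing $f$ with $g_T=\int_0^T(P_sf-\E f)\,ds$ and using pointwise Cauchy--Schwarz gives $\E|\nabla f|\ge(1-e^{-T})\Var(f)/\|\,|\nabla g_T|\,\|_\infty$. The gap is entirely in Step~2, and it is the whole difficulty rather than a detail. You need the \emph{pointwise} bound $\|\,|\nabla g_T|\,\|_\infty\le C/\sqrt T$ for $T\asymp\log(1+e/W(f))$, yet the only decay mechanism you invoke is hypercontractivity, which furnishes $L^p\to L^q$ estimates for \emph{finite} exponents and nothing at $q=\infty$: on the cube $P_s$ is not ultracontractive with dimension-free constants (already $\|P_s\|_{L^1\to L^\infty}\ge(1+e^{-s})^n$), so one cannot pass from $\|\mathbf 1_{A_j}\|_p=\Inf_j(f)^{1/p}$ to uniform-in-$x$ control of $P_s\mathbf 1_{A_j}(x)$. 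The only dimension-free pointwise input is $P_s\mathbf 1_{A_j}\le 1$, which after integrating gives $|D_jg_T(x)|\le 1-e^{-T}$ and hence $|\nabla g_T(x)|^2\le s_f(x)(1-e^{-T})^2$---large at points of high sensitivity, not small. You correctly identify this obstruction (``uniform-in-$x$ control \dots\ is the technical crux''), but the heuristic offered---that where $s_f(x)$ is large the relevant $\Inf_j(f)$ are small so decay ``sets in early''---is precisely the statement that needs proof, and it does not follow from the listed ingredients. That the bound happens to check out on dictators, AND, tribes and parity is suggestive but not an argument; and even if it were true on the cube via some combinatorial route, the extension to $\CD(K,\infty)$ spaces would be far from automatic, since there is no analogue of ``$D_jf\in\{-1,0,1\}$'' for the carr\'e du champ.

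The paper never meets this obstruction because it stays in $L^1/L^2$ throughout. In place of your duality bound it uses a local Bobkov--G\"otze inequality $\I(P_th)\le C\sqrt{1-e^{-2t}}\,P_t|\nabla h|$ for $\{0,1\}$-valued $h$, which after integration gives the \emph{averaged} estimate $\Var(f)-\Var(P_tf)\lesssim\sqrt{1-e^{-2t}}\,\E|\nabla f|$; then a martingale decomposition $f-\E f=\sum_j(f^{(j-1)}-f^{(j)})$, hypercontractivity at the finite exponent $1+e^{-2Kt}$, and H\"older yield $\Var(P_tf)\le W(f)^{\theta(t)}\Var(f)^{1-\theta(t)}$. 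Optimizing $t$ finishes. The structural point is that the paper asks hypercontractivity to bound $\Var(P_tf)=\|P_t(f-\E f)\|_2^2$, an integrated quantity---exactly what $L^{1+e^{-2Kt}}\to L^2$ is built for---whereas your scheme asks it to bound a supremum. If you try to repair Step~1 by replacing the $L^\infty$ pairing with an $L^2$ one so that only $\|\nabla g_T\|_2$ is needed, you lose the direct link to $\E|\nabla f|$ and are pushed back toward the paper's route via an isoperimetric input.
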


This inequality was motivated by a conjecture of Talagrand  \cite{talagrand97boundaries} asked on the $p$-biased hypercube, i.e., 
\begin{equation}
\label{talerti}
\E\sqrt{h_{f}}\ge C_{p} \Var(f)\sqrt{\log\left(1+\frac{e}{\sum_{j=1}^{n}\Inf_j(f)^2}\right)},
\end{equation}
where $h_{f}$ is supported on a set $A =\{x : f(x)=1\}$, and at each point $x \in A$, the value $h_{f}$ counts the number of edges joining $x$ with $A^{c}$, i.e., complement of $A$. Clearly we always have $|\nabla f|  = \sqrt{h_{f}+h_{-f}}\geq \sqrt{h_{f}}$, therefore, the conjecture  (\ref{talerti}), if correct, would imply Eldan--Gross inequality (\ref{ineq:eg}).  However, in general $\mathbb{E}|\nabla f|$ and $\mathbb{E}\sqrt{h_{f}}$ may behave quite differently as $n\to \infty$, for example, if $A=\{(1, \ldots, 1)\}$ is a single point then  $\mathbb{E}|\nabla f| = \frac{n+\sqrt{n}}{2^{n}}\gg\frac{\sqrt{n}}{2^{n}}=\mathbb{E} \sqrt{h_{f}}$.

The proof of Eldan and Gross of the inequality (\ref{ineq:eg}) uses stochastic analysis and also leads to other impressive consequences. There are now several other proofs of \eqref{ineq:eg}; see for example van Handel  \cite{ramon}, Eldan--Kindler--Lifshitz--Minzer \cite{simpler23} and Beltran--Ivanisvili--Madrid  \cite[Remark 2]{BIM23sharp}. The main aim of this paper is to illustrate that some key ingredients of these proofs extend to more general spaces and the Boolean cube structure is not essential. Let us remark that the proof in \cite{ramon} can achieve the same goal after some modifications. Here, we highlight the use of an isoperimetric-type inequality by Bobkov and G\"{o}tze \cite{bg99} in 1999, the way of using which can be of independent interest.

We shall prove Theorem \ref{thm:eg} on the $p$-biased hypercubes; see Section \ref{sect:boolean proof} for details. The proof relies on Bobkov and G\"{o}tze's isoperimetric-type inequality \cite{bg99} and the hypercontractivity. 

\medskip

It might be easier to understand the proof in the continuous setting, and one typical example is the Gauss space. In fact, one may go further.
Let $(\spa,\mea,\Gamma)$ be a \emph{full Markov triple} with $(P_t)=(e^{-tL})$ being the associated diffuse Markov semigroup in the sense of \cite{bgl14book}. Here $E$ is a measure space with $\mu$ being some \emph{probability measure}, and $\Gamma$ is the \emph{carr\'e du champs} operator associated to $(P_t)=(e^{-tL})$:
$$\Gamma(f)=fL(f)-\frac{1}{2}L(f^2),$$
for suitable $f:E\to \real$.  By ``suitable" we refer to a certain class of functions for  which $\Gamma(f)$ is well-defined. In the sequel, we omit the rigorous definition to keep presentation compact, and we refer to \cite[Chapter 3]{bgl14book} for detailed definitions for $(\spa, \mea, \Gamma)$ being a full Markov triple. One crucial property here is the diffusion property, which may be understood as the chain rule for $L$, so that many functional inequalities follow from Bakry--\'Emery criterion
\begin{equation*}
	\Gamma (P_t f)\le e^{-2Kt}P_t\Gamma f
\end{equation*}
for all $f$ and $t\ge 0$. In this case, we say that the Ricci curvature is bounded from below by $K$, or it satisfies Bakry--\'Emery curvature-dimension condition $\CD(K,\infty)$. 
Examples of such include the heat semigroups on Riemannian manifolds  and $K$ can be chosen to be the lower bound of  Ricci curvature, which motivated the name in the above general framework. We will be interested in the case when the Ricci curvature lower bound $K$ is positive, for which a typical example is the  Gauss space with $(P_t)$ being the Ornstein--Uhlenbeck semigroup. In this case, the curvature lower bound is $K=1$. 

For any $n\ge 1$, denote $\spa^n$ the $n$-fold product of $(\spa,\mea)$ equipped with the product probability measure $\mea_n.$ We consider the tensor product of $(P_t)$ over $\spa^n$, which will be denoted by $(P_t)=(e^{-tL})$ again whenever no confusion can occur. For $f:\spa^n\to \real$, we define its variance 
\begin{equation*}
		\Var(f):=\E|f-\E f|^2=\E(f^2)-(\E f)^2,
\end{equation*}
with respect to $\mu_n$. Again, for Boolean functions $f:\spa^n\to\{-1,1\}$ one has
\begin{equation}\label{eq:boolean l1 l2 general}
	\|f-\E f\|_1=4a(1-a)=\Var(f)\qquad \textnormal{ with }\qquad a=\mu_n(\{f=1\}).
\end{equation}
Here and in what follows, for $f:E^n\to \real$, the $L^{p}$ norm $\|f\|_p$ is defined with respect to $\mu_n$. We define 
\begin{equation*}
	|\nabla f|:=\Gamma(f)^{1/2}
\end{equation*}
which is compatible with the discrete hypercube case by computing $\Gamma(f)$ for the heat semigroup.
For any Boolean function $f:\spa^n\to\{-1,1\}$ and $1\le j\le n$, we define the \emph{influence of $j$-th variable} as
\begin{equation*}
	\Inf_j(f):=\E|f-\E_{x_j}(f)|.
\end{equation*}
There are different definitions of influences in this context; our definition (also known as \emph{$L^1$-influence} or \emph{geometric influence}) agrees with the one in discrete hypercube case explained above.

\begin{theorem}\label{thm:general}
	Let $(\spa,\mea,\Gamma)$ be a full Markov triple satisfying Bakry--\'Emery curvature-dimension condition $\CD(K,\infty)$ with $\mu$ a probability measure and $0<K<\infty$. Then there exist constants $C=C_K, C'=C_K'$ depending only on $K$ such that for all $n\ge 1$ and all Boolean functions $f:\spa^n\to\{-1,1\}$
		\begin{equation}\label{ineq:talagrand ctns}
	\E|\nabla f|\ge C_K \Var(f)\sqrt{\log\left(\frac{e}{\Var(f)}\right)}
	\end{equation}
	and 
	\begin{equation}\label{ineq:eg ctns}
	\E|\nabla f|\ge C_K' \Var(f)\sqrt{\log\left(1+\frac{e}{\sum_{j=1}^{n}\Inf_j(f)^2}\right)}.
	\end{equation}
\end{theorem}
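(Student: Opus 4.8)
The plan is to prove the two inequalities \eqref{ineq:talagrand ctns} and \eqref{ineq:eg ctns} in that order, since the second should follow from the first once we quantify the variance in terms of the influences. The key analytic inputs are hypercontractivity for $(P_t)$ (available in the $\CD(K,\infty)$ setting with $K>0$, where the log-Sobolev constant is controlled by $K$) and the Bobkov--Götze isoperimetric-type inequality, which relates $\E|\nabla f|$ to the Gaussian isoperimetric profile $\mathcal I(t)=\varphi(\Phi^{-1}(t))$ evaluated along the heat flow: roughly $\E|\nabla f| \gtrsim_K \sup_{t>0} \E \,\mathcal I(P_t f)$ or an inequality of the form $\E \mathcal I(f) \le \E|\nabla f|$ combined with semigroup regularization. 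Near $t\to 0^+$ the profile $\mathcal I(a)$ behaves like $a\sqrt{2\log(1/a)}$ as $a\to 0$, which is exactly the shape appearing on the right-hand side.

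First I would reduce to a function with a definite mean. Since $f$ is Boolean with $a=\mu_n(\{f=1\})$ we have $\Var(f)=4a(1-a)$ by \eqref{eq:boolean l1 l2 general}, so WLOG $a\le 1/2$ and $\Var(f)\asymp a$; it then suffices to show $\E|\nabla f|\gtrsim_K a\sqrt{\log(e/a)}$. Next I would run the heat semigroup for a small time $t$ (to be optimized) and use the standard chain: by the semigroup contraction of the gradient under $\CD(K,\infty)$, $\E|\nabla P_t f|$ is comparable to $\E|\nabla f|$ up to a factor depending on $K$ and $t$ (here one uses $\Gamma(P_tf)\le e^{-2Kt}P_t\Gamma f$ together with Jensen / Cauchy--Schwarz to pull the square root inside); meanwhile $P_t f$ is now a bounded smooth function to which the Bobkov--Götze inequality applies cleanly, giving $\E|\nabla P_t f|\ge c\,\E\,\mathcal I(P_t f)$ for a dimension-free constant $c$ (here I'd invoke Bobkov--Götze exactly as stated/used in the referenced literature, i.e. the Gaussian-isoperimetric functional inequality $\mathcal I(\E g)\le \E\sqrt{\mathcal I(g)^2+|\nabla g|^2}$, or its $L^1$ consequence). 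The point of mollifying is that we then lower-bound $\E\,\mathcal I(P_t f)$ from below: by hypercontractivity $\|P_t f - \E f\|_2$ stays within a constant factor of $\Var(f)^{1/2}$ for $t\lesssim_K 1$, so $P_t f$ is still "mostly near $0$ and $1$ in the right proportions," and concavity/convexity properties of $\mathcal I$ together with a level-set or truncation argument give $\E\,\mathcal I(P_t f)\gtrsim a\sqrt{\log(e/a)}$. Choosing $t$ a small constant depending on $K$ balances the loss from gradient contraction against this gain, yielding \eqref{ineq:talagrand ctns}.

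For \eqref{ineq:eg ctns} I would combine \eqref{ineq:talagrand ctns} with a second, "easy" lower bound that already appears implicitly in the subadditivity of influences: on the one hand, a Poincaré-type or tensorization argument gives $\E|\nabla f|^2 = \sum_j \E|D_j f|^2$-type control, and more usefully $\E|\nabla f|\ge \max_j \Inf_j(f)$ and $\E|\nabla f|^2 \ge \Var(f)$ up to $K$-dependent constants (via the spectral gap, which is $\ge K$ under $\CD(K,\infty)$). Then I distinguish two regimes according to whether $\sum_j \Inf_j(f)^2$ is larger or smaller than $\Var(f)$ (equivalently than $a$): when the influences are "spread out" so that $\sum_j \Inf_j(f)^2 \le \Var(f)$, the logarithm in \eqref{ineq:eg ctns} is at least a constant multiple of the logarithm in \eqref{ineq:talagrand ctns}, and \eqref{ineq:talagrand ctns} already suffices; when $\sum_j \Inf_j(f)^2 > \Var(f)$, the right-hand side of \eqref{ineq:eg ctns} is only $\lesssim \Var(f)$ times a bounded quantity, and the trivial bound $\E|\nabla f|\ge c_K\Var(f)$ closes the case. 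Patching the two regimes gives \eqref{ineq:eg ctns} with a possibly smaller constant $C_K'$.

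The main obstacle I expect is making the lower bound $\E\,\mathcal I(P_t f)\gtrsim a\sqrt{\log(e/a)}$ rigorous with a constant independent of $n$: one must show that running the heat flow for a fixed small time does not destroy the two-point concentration of a Boolean function, and this is exactly where hypercontractivity (rather than mere contractivity in $L^2$) is needed — one wants $P_t f$ to be close to $f$ not just in $L^2$ but in a way that controls $\E\,\mathcal I(P_t f)$ from below, which typically requires an $L^2\to L^4$ hypercontractive estimate to bound the "mass that has leaked away from $\{0,1\}$." A secondary technical point is the precise form of the Bobkov--Götze inequality in the product Markov-triple setting and the verification that its constant is dimension-free under $\CD(K,\infty)$; I would handle this by citing \cite{bg99} together with the tensorization of the Gaussian-isoperimetric functional inequality, noting that $\CD(K,\infty)$ tensorizes with the same $K$.
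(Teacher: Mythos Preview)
Your dichotomy for \eqref{ineq:eg ctns} is backwards, and this is the main gap. Write $W(f):=\sum_j\Inf_j(f)^2$. When $W(f)\le\Var(f)$ (spread-out influences), the logarithm on the right of \eqref{ineq:eg ctns} is \emph{larger} than the one in \eqref{ineq:talagrand ctns}, so \eqref{ineq:talagrand ctns} does \emph{not} suffice there; and when $W(f)>\Var(f)$ the right side of \eqref{ineq:eg ctns} is not ``$\Var(f)$ times a bounded quantity'' either, since both $W$ and $\Var$ can be tiny simultaneously---though in that regime \eqref{ineq:talagrand ctns} \emph{does} cover \eqref{ineq:eg ctns}, because $\log(1+e/W)\le\log(1+e/\Var(f))\lesssim\log(e/\Var(f))$. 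So the genuinely hard case is precisely $W(f)\ll\Var(f)$, and there the paper needs an ingredient you do not mention: a martingale decomposition $\Var(P_tf)=\sum_j\|P_t(f^{(j-1)}-f^{(j)})\|_2^2$ combined with hypercontractivity and H\"older to obtain
\[
\Var(P_tf)\le W(f)^{\theta_K(t)}\,\Var(f)^{1-\theta_K(t)},\qquad \theta_K(t)=\frac{1-e^{-2Kt}}{1+e^{-2Kt}}.
\]
It is this bound, plugged into $\Var(f)-\Var(P_tf)\le c_K\sqrt{1-e^{-2Kt}}\,\E|\nabla f|$ and optimized in $t$, that converts the argument into a logarithm in $W(f)$ rather than in $\Var(f)$.

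For \eqref{ineq:talagrand ctns} your outline is also off. You propose to smooth, compare $\E|\nabla P_tf|$ with $\E|\nabla f|$, and then use ``$\E|\nabla P_tf|\ge c\,\E\,\I(P_tf)$''; but this last inequality is not what Bobkov's functional inequality gives for the non-Boolean function $P_tf$. The paper instead uses the \emph{local} Bobkov inequality $\I(P_tg)\le\sqrt{(1-e^{-2Kt})/K}\,P_t|\nabla g|$ applied to the Boolean $g=(1+f)/2$, where the crucial feature is the vanishing factor $\sqrt{1-e^{-2Kt}}$. Together with the elementary bound $\I(s)\ge c\bigl(1-(2s-1)^2\bigr)$ this yields
\[
\Var(f)-\Var(P_tf)\le c_K\sqrt{1-e^{-2Kt}}\,\E|\nabla f|,
\]
and hypercontractivity gives $\Var(P_tf)\le\Var(f)^{1+\theta_K(t)}$; choosing $1-e^{-2Kt}\sim 1/\log(1/\Var(f))$ produces the $\sqrt{\log}$. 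Alternatively one may let $t\to\infty$, obtain $\I(a)\le c_K\,\E|\nabla f|$ directly, and use the asymptotic $\I(a)\asymp a\sqrt{2\log(1/a)}$---this shortest route avoids your ``main obstacle'' entirely, and shows that the semigroup smoothing you propose is not needed for \eqref{ineq:talagrand ctns}.
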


The inequality \eqref{ineq:talagrand ctns} was inspired by Talagrand's inequality \cite{talagrand93}. Although this inequality can be directly derived from the stronger, so-called {\em local Bobkov inequality} well-known in the literature, we will provide an "alternative" proof in Section \ref{sect:general case}. This proof, while slightly longer, incorporates the {\em local Bobkov inequality} as one of its key components. This approach might seem excessive, but it is intentional, as the proof of \eqref{ineq:eg ctns} will closely follow the structure of the "alternative" proof of \eqref{ineq:talagrand ctns}, and additioanly,  the inequality (\ref{ineq:talagrand ctns}) itself  will be used in our proof of \eqref{ineq:eg ctns}.

\subsection*{Acknowledgment}
P.I. is supported by National Science Foundation CAREER-DMS-2152401. H.Z. is supported by National Science Foundation DMS-2453408.

	 \section{Boolean cube case}\label{sect:boolean proof}
	 
	In this section, we give an alternative proof of Theorem \ref{thm:eg}. 
The proof also extends to the biased case which we now explain. 
For any $p\in (0,1)$ we equip $\{-1,1\}$ with the probability measure $\mu_p=p\delta_{1}+(1-p)\delta_{-1}$. We use $\|f\|_q$ to denote the $L^q$-norm of $f:\{-1,1\}^n\to \real$ with respect to $\mu_p^{\otimes n}$. We write $\E=\E_{\mu_p^{\otimes n}}$ for short, and consider 
\begin{equation}\label{eq: defn of semigroup biased}
P_t=(e^{-t}\id+(1-e^{-t})\E)^{\otimes n},\qquad t\ge 0
\end{equation}
which is a semigroup of unital positive linear operators over $\real^{\{-1,1\}^n}$ such that $P_0(f)=f$ and $\lim_{t\to\infty} P_t (f)=\E f$. Accordingly, we write $\Var(f)$ for $\Var_{\mu_p}(f)$ with respect to this biased $\mu_p$. 

For any $f:\{-1,1\}^n\to \{-1,1\}$ and $1\le j\le n$, we define the $j$-influence of $f$ as 
\begin{equation}\label{eq:defn of biased influence}
\Inf_j(f):=\|f-\E_{x_j\sim \mu_p} f\|_1.
\end{equation}
In a literature of Boolean functions, in general the quantity  $\|D_jf\|_1$ is defined as the  ($L^1$) influence on the biased hypercube. Notice that, for $f:\{-1,1\}^n\to \{-1,1\}$ one has 
$$
\|f-\E_{x_j\sim \mu_p} f\|_1=4p(1-p)\|D_jf\|_1,
$$
and the constant factor of $4p(1-p)$ will make no essential difference for our discussion. So we keep our choice \eqref{eq:defn of biased influence}.
We still put 
$$
|\nabla f(x)|:=\left(
\sum_{j=1}^{n}|D_jf(x)|^2\right)^{1/2}.
$$
Then the main result of this section reads as follows.

\begin{theorem}\label{thm:eg biased}
	Fix $p\in (0,1)$ and let the notation be as above. Then there exists a constant $C_p>0$ such that for all $n\ge 1$ and all Boolean functions $f:\{-1,1\}^n\to \{-1,1\}$ we have 
	\begin{equation}\label{ineq:eg biased}
	\E|\nabla f|\ge C_p \Var(f)\sqrt{\log\left(1+\frac{e}{\sum_{j=1}^{n}\Inf_j(f)^2}\right)}.
	\end{equation}
\end{theorem}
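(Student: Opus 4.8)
The plan is to reduce Theorem~\ref{thm:eg biased} to the two ingredients already isolated in the introduction: the biased analogue of the Talagrand-type bound \eqref{ineq:talagrand ctns}, namely $\E|\nabla f|\ge c_p\Var(f)\sqrt{\log(e/\Var(f))}$, and a hypercontractive estimate controlling $\Var(f)$ from below by $\sum_j\Inf_j(f)^2$ when the latter sum is small. Concretely, I would first prove the biased local Bobkov inequality for the semigroup \eqref{eq: defn of semigroup biased} --- this follows from tensorization together with the one-variable case, which is a finite two-point inequality that can be checked directly (or cited from Bobkov--G\"otze \cite{bg99}) --- and integrate it along $(P_t)$ exactly as in the $\CD(K,\infty)$ argument to obtain the Talagrand-type inequality on $(\{-1,1\}^n,\mu_p^{\otimes n})$. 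The point is that all the machinery of Section~\ref{sect:general case} (local Bobkov, integration along the semigroup, hypercontractivity) has a discrete biased counterpart with $K$ replaced by the spectral gap / log-Sobolev constant of $\mu_p$, which depends only on $p$.

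Next I would handle the ``$\sum_j\Inf_j(f)^2$ small'' regime, which is where \eqref{ineq:eg biased} is genuinely stronger than \eqref{ineq:talagrand ctns}. Here the strategy is the level-$k$ / hypercontractivity argument familiar from the proofs of \eqref{ineq:eg}: writing $g_t=P_t f$ for a well-chosen $t=t(p)$, one uses hypercontractivity of $(P_t)$ on the biased cube to bound the low-degree part of $f$, and relates $\|g_t - \E g_t\|_2^2$ both to $\Var(f)$ (it is a fixed fraction of it) and to $\sum_j \|D_j g_t\|_2^2$, while $\|D_j g_t\|_2^2$ is in turn controlled by $\Inf_j(f)$ via the smoothing $P_t$. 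Combining these gives $\Var(f)\le C_p \sum_j \Inf_j(f)^2$ unless $\Var(f)$ is already bounded below by an absolute constant; feeding this into $\E|\nabla f|\ge c_p\Var(f)\sqrt{\log(e/\Var(f))}$ and doing the elementary comparison of the two logarithmic factors $\log(e/\Var(f))$ versus $\log(1+e/\sum_j\Inf_j(f)^2)$ yields \eqref{ineq:eg biased}. When $\Var(f)$ is bounded below, \eqref{ineq:talagrand ctns} alone already gives the result since the right-hand side of \eqref{ineq:eg biased} is then $O(1)$ while $\E|\nabla f|\gtrsim_p 1$.

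I would organize the write-up as: (i) state and prove the biased local Bobkov inequality (tensorization + two-point base case); (ii) derive the biased Talagrand inequality by integration along $(P_t)$; (iii) prove the hypercontractive bound $\Var(f)\lesssim_p \sum_j\Inf_j(f)^2$ in the small-influence regime; (iv) assemble the pieces with the elementary logarithmic comparison. The main obstacle I anticipate is step (iii): on the biased cube the Fourier-analytic bookkeeping is messier than on the symmetric cube because the natural orthonormal basis is the $p$-biased one and the operators $D_j$, $\E_{x_j}$, $P_t$ interact with it through $p$-dependent constants; one must be careful that the hypercontractivity exponent and the resulting constants stay bounded for fixed $p$ (they degenerate as $p\to 0$ or $p\to 1$, which is why $C_p$ and not a universal constant). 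A secondary technical point is ensuring the local Bobkov inequality on $\mu_p$ holds with the sharp Gaussian-type profile $\I$ rather than merely up to constants, since the clean integration-along-the-semigroup step in (ii) uses the exact functional equation satisfied by $\I=\varphi\circ\Phi^{-1}$; if only a constant-loss version is available one instead works with a comparison to the Gaussian isoperimetric function, which still suffices for \eqref{ineq:talagrand ctns}.
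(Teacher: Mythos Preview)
Your assembly in steps (iii)--(iv) has a genuine gap. The dichotomy you propose is false on both sides. First, the claim ``$\Var(f)\le C_p\sum_j\Inf_j(f)^2$ unless $\Var(f)$ is bounded below by an absolute constant'' fails: take $f$ to be $\pm 1$ according to membership in a subcube of codimension $m$ on the unbiased cube; then $\Var(f)\asymp 2^{-m}$ is arbitrarily small while $W(f):=\sum_j\Inf_j(f)^2\asymp m\,2^{-2m}$, so $\Var(f)/W(f)\asymp 2^{m}/m\to\infty$. Second, and more seriously, the sentence ``when $\Var(f)$ is bounded below, the right-hand side of \eqref{ineq:eg biased} is $O(1)$'' is simply wrong: the right-hand side is $\Var(f)\sqrt{\log(1+e/W(f))}$, and for Tribes-type functions one has $\Var(f)\asymp 1$ together with $W(f)\to 0$, so the right-hand side is unbounded. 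This is precisely the regime where \eqref{ineq:eg biased} carries content beyond Talagrand, and your outline offers nothing there. Relatedly, your step (iii) asserts that $\Var(P_tf)$ is ``a fixed fraction'' of $\Var(f)$ for a fixed $t=t(p)$; this is false in general (high-degree functions lose almost all their variance under $P_t$), and it is exactly why a single fixed $t$ cannot work.

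What the paper does instead is keep $t$ free and combine two $t$-dependent estimates. From the local Bobkov-type inequality one gets not just the endpoint Talagrand bound but the finite-time estimate
\[
\Var(f)-\Var(P_tf)\;\le\; c_p\sqrt{1-e^{-2t}}\,\E|\nabla f|,
\]
while hypercontractivity applied through the martingale decomposition $f^{(j)}=\E_{x_1,\dots,x_j}f$ gives the interpolation
\[
\Var(P_tf)\;\le\; W(f)^{\theta(t)}\,\Var(f)^{1-\theta(t)},\qquad \theta(t)=\tfrac{1-e^{-2Kt}}{1+e^{-2Kt}}.
\]
Subtracting and then choosing $t$ so that $1-e^{-2Kt}=1/\log\!\bigl(\Var(f)/W(f)\bigr)$ (after disposing of the easy regimes $W(f)\gtrsim 1$, $\Var(f)<W(f)^{1/2}$, $\Var(f)<100\,W(f)$ via \eqref{ineq:isoperimetry boolean cube biased} and Talagrand) yields \eqref{ineq:eg biased}. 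The essential idea you are missing is this optimization over $t$: neither the $t\to\infty$ limit (Talagrand) nor any fixed $t$ suffices.
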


One ingredient of the proof can be derived from an isoperimetric-type inequality of Bobkov and G\"otze \cite{bg99}. The proof is inspired by the unbiased case \cite{learning}. More discussions can be found in Appendix \ref{appendix on Bobkov}. Similar estimates were obtained in \cite[Claim 8, items 1-2]{ramon} via a more direct argument.

\begin{lemma}\label{lem: biased isoperimetry}
Let $p\in (0,1).$ Suppose that $t\ge 0$ and $f:\{-1,1\}^n\to \{-1,1\}$. Then 
\begin{equation}
	\Var(f)-\Var(P_t f)=1-\E|P_tf|^2\le 2\max\{p,1-p\}\sqrt{\pi(1-e^{-2t})}\E|\nabla f|.
\end{equation}
In particular, sending $t\to\infty$, we have 
\begin{equation}\label{ineq:isoperimetry boolean cube biased}
\|f-\E f\|_1=\Var(f)\le 2\max\{p,1-p\}\sqrt{\pi}\E|\nabla f|.
\end{equation}
\end{lemma}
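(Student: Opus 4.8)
The plan is to follow the classical Bobkov--Götze route. Recall that the Bobkov--Götze inequality (the semigroup form of Bobkov's functional inequality) states that for the biased cube with the semigroup $(P_t)$ in \eqref{eq: defn of semigroup biased}, one has a pointwise bound relating $P_t f$ to $f$ via the Gaussian isoperimetric profile $\I(x)=\varphi(\Phi^{-1}(x))$; concretely, for $g:\{-1,1\}^n\to[0,1]$,
$$
\I(P_t g)\le P_t\sqrt{\I(g)^2+c(t)|\nabla g|^2},
$$
with $c(t)$ of order $1-e^{-2t}$, the biased constant $\max\{p,1-p\}$ entering through the one-dimensional estimate. First I would apply this to $g=\frac{1+f}{2}$, which takes values in $\{0,1\}$, so that $\I(g)\equiv 0$ and $|\nabla g|=\tfrac12|\nabla f|$; this collapses the right-hand side to $P_t(c(t)^{1/2}\tfrac12|\nabla f|)$ up to constants, i.e. $\I(P_t g)\le C\max\{p,1-p\}\sqrt{1-e^{-2t}}\,P_t|\nabla f|$ pointwise.

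Next I would integrate over $\{-1,1\}^n$ against $\mu_p^{\otimes n}$. Since $(P_t)$ is a Markov operator preserving the measure, $\E P_t|\nabla f|=\E|\nabla f|$, so the right-hand side integrates to $C\max\{p,1-p\}\sqrt{1-e^{-2t}}\,\E|\nabla f|$. For the left-hand side I would use the standard two-sided bound $\I(x)\ge \min\{x,1-x\}$ (up to a universal constant; in fact $\I(x)\ge \tfrac{1}{\sqrt{2\pi}}$-type bounds near $1/2$ and $\I(x)\asymp x\sqrt{\log(1/x)}$ near the endpoints, but the crude $\I(x)\gtrsim x(1-x)$ suffices). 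Thus $\E\,\I(P_t g)\gtrsim \E[P_t g(1-P_t g)]$. Writing $P_t g=\tfrac{1+P_t f}{2}$, we get $P_t g(1-P_t g)=\tfrac14(1-(P_t f)^2)$, hence $\E[P_t g(1-P_t g)]=\tfrac14(1-\E|P_t f|^2)$.

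It then remains to identify $1-\E|P_t f|^2$ with $\Var(f)-\Var(P_t f)$: since $P_t$ preserves the mean, $\E P_t f=\E f$, and $f^2\equiv 1$ gives $\Var(f)=1-(\E f)^2$, so $\Var(f)-\Var(P_t f)=(1-(\E f)^2)-(\E|P_tf|^2-(\E f)^2)=1-\E|P_t f|^2$, which is exactly the first claimed equality. Combining the two sides yields $1-\E|P_t f|^2\le C\max\{p,1-p\}\sqrt{1-e^{-2t}}\,\E|\nabla f|$; tracking the constant through the one-dimensional Bobkov--Götze estimate pins it at $2\max\{p,1-p\}\sqrt{\pi}$ in the limit, i.e. $c(t)=1-e^{-2t}$ produces the factor $\sqrt{\pi(1-e^{-2t})}$ as stated. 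Finally, letting $t\to\infty$ gives $P_t f\to\E f$ in $L^2$, so $1-\E|P_t f|^2\to 1-(\E f)^2=\Var(f)$, and using \eqref{eq:boolean l1 l2} (the identity $\|f-\E f\|_1=\Var(f)$) yields \eqref{ineq:isoperimetry boolean cube biased}.

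The main obstacle is verifying the Bobkov--Götze inequality with the correct biased constant: one must check the base case $n=1$ for the biased two-point space, where the sharp constant $\max\{p,1-p\}$ (rather than something symmetric in $p$) emerges from comparing the discrete gradient on $\{-1,1\}$ with $\mu_p$ to the Gaussian isoperimetric profile, and then tensorize. The tensorization is routine once the one-dimensional inequality is in hand (it follows from the sub-additivity structure of $\I$ built into Bobkov's argument), and passing from the pointwise inequality to the integrated one, and then to the limit, is straightforward.
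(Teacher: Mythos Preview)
Your proposal is correct and follows the same overall route as the paper: obtain a pointwise bound $\I(P_t g)\le C_p\sqrt{1-e^{-2t}}\,P_t|\nabla g|$ for $\{0,1\}$-valued $g$, apply it with $g=(1+f)/2$ so that $\I(g)\equiv 0$, use a quadratic lower bound on $\I$ to convert the left side into $1-(P_tf)^2$, integrate, and let $t\to\infty$.

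The one substantive difference is in how the pointwise inequality is obtained. You invoke a local (semigroup) Bobkov inequality $\I(P_t g)\le P_t\sqrt{\I(g)^2+c(t)|\nabla g|^2}$ on the biased cube and propose to verify the one-dimensional case and tensorize. The paper does not establish or cite such a functional inequality; instead it applies the Bobkov--G\"otze product-space bound
\[
\I(\E_\nu h)\le\sqrt{2}\,\E_\nu\sqrt{\textstyle\sum_i\Var_{\nu_i}(h)},\qquad h:\prod_i\Omega_i\to\{0,1\},
\]
directly, choosing on each factor $\Omega_i=\{-1,1\}$ the measure $\nu_i$ (depending on $x_i$ and $t$) for which $\E_{\nu_i}(\cdot)=P_t(\cdot)(x_i)$, so that $\E_\nu(\cdot)=P_t(\cdot)(x)$. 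A short explicit computation then gives $\Var_{\nu_i}(h)\le \max\{p,1-p\}^2(1-e^{-2t})|D_ih|^2$, which yields the pointwise bound with the stated constant. This bypasses the need to prove (or locate) a full biased local Bobkov inequality: only the indicator case is required, and it drops out of an off-the-shelf result. The remaining steps---the specific lower bound $\I(s)\ge\sqrt{2/\pi}\,[\tfrac12-2(\tfrac12-s)^2]=2\sqrt{2/\pi}\,s(1-s)$, the identity $1-\E|P_tf|^2=\Var(f)-\Var(P_tf)$, and the limit $t\to\infty$---are exactly as you describe.
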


\begin{proof}
According to \cite[Theorem 2.3]{bg99}, for probability spaces $(\Omega_i,\nu_i), 1\le i\le n$ and their product $(\Omega,\nu)$, we have 
\begin{equation}\label{ineq:bg general space}
\I(\E_{\nu}h)\le \sqrt{2}\E_{\nu}\sqrt{\sum_{i}\Var_{\nu_i}(h)}
\end{equation}
for any measurable  $h:\Omega\to \{0,1\}$, where $\I$ is the Gaussian isoperimetric profile $\I:[0,1]\to [0,1/\sqrt{2\pi}]$ given by $\I=\varphi\circ \Phi^{-1}$ with $\Phi(r)=\int_{-\infty}^{r} \varphi(x)dx$ and $\varphi(x)=\frac{1}{\sqrt{2\pi}}e^{-x^2/2}$.

Now we choose $\Omega_i=\{-1,1\}$. The probability measure will be chosen as follows. Fix $x\in \{-1,1\}^n$ and $t\ge 0$, and consider the following linear functional $\varphi_i$ 
$$
\varphi_i(g):=P_t(g)(x_i)
=(1-e^{-t})[pg(1)+(1-p)g(-1)]+e^{-t}g(x_i),\qquad g:\Omega_i=\{-1,1\}\to \real,
$$
where $P_t$ is exactly the depolarizing semigroup \eqref{eq: defn of semigroup biased} in dimension $n=1$.
Then $\varphi_i$ is positive and $\|\varphi_i\|=\varphi_i(1)=1$. By Riesz representation theorem, there exists a probability measure $\nu_i$ over $\{-1,1\}$ such that 
$$\E_{\nu_i}(g)=\int_{\{-1,1\}}gd\nu_i
=P_t(g)(x_i),
\qquad g:\{-1,1\}\to \real.$$
Therefore, in general dimension $n\ge 1$ one has 
\begin{equation}\label{eq:expectation of nu}
\E_\nu(g)=P_t(g)(x),\qquad g:\{-1,1\}^n\to\real
\end{equation}
for $P_t$ in \eqref{eq: defn of semigroup biased}.
Moreover, for $h:\{-1,1\}^n\to \{0,1\}$ we have
\begin{align*}
\E_{\nu_i}(h(y))
&=(1-e^{-t})[ph(y_1,\dots, 1,\dots, y_n)+(1-p)h(y_1,\dots, -1,\dots, y_n)]+e^{-t}h(y_1,\dots, x_i,\dots, y_n)\\
&=
(1-e^{-t})(pa+qb)+e^{-t}c
\end{align*}
where $a=h(y_1,\dots, 1,\dots, y_n),b=h(y_1,\dots, -1,\dots, y_n)$ and  $c=h(y_1,\dots, x_i,\dots, y_n)$. Here we also write $q=1-p$ and let us denote $r:=\max\{p,q\}$. Thus using $a=a^2,b=b^2,c=c^2$ we obtain by simple computations: if $c=a$, then
\begin{equation}
\Var_{\nu_i}(h(y))=\E_{\nu_i}(h(y))-\E_{\nu_i}(h(y))^2
=q(1-e^{-t})(p+qe^{-t})(a-b)^2\le r^2(1-e^{-2t})(a-b)^2,
\end{equation}
while if  $c=b$, then
\begin{equation}
\Var_{\nu_i}(h(y))=\E_{\nu_i}(h(y))-\E_{\nu_i}(h(y))^2
=p(1-e^{-t})(q+pe^{-t})(a-b)^2\le r^2(1-e^{-2t})(a-b)^2.
\end{equation}
So in either case, we have
\begin{equation}
\Var_{\nu_i}(h(x))
\le r^2 (1-e^{-2t})|h(x_1,\dots, x_i,\dots, x_n)-h(x_1,\dots, -x_i,\dots, x_n)|^2
=4r^2(1-e^{-2t})|D_i h(x)|^2
\end{equation}
and thus for all $h:\{-1,1\}^n\to \{0,1\}$
\begin{equation}\label{ineq:sum of variance nu}
\sum_{i=1}^{n}\Var_{\nu_i}(h(x))
\le 4r^2(1-e^{-2t})\sum_{i=1}^{n}|D_i f(x)|^2
=4r^2(1-e^{-2t})|\nabla h(x)|^2.
\end{equation}

Combining \eqref{ineq:bg general space}, \eqref{eq:expectation of nu} and \eqref{ineq:sum of variance nu}, one has the pointwise inequality
\begin{equation}\label{ineq: biased isoperimetric profile}
\I(P_t h)\le 2\sqrt{2}\max\{p,1-p\}\sqrt{1-e^{-2t}}P_t|\nabla h|,\qquad h:\{-1,1\}^n\to \{0,1\}.
\end{equation}
For any $f:\{-1,1\}^n\to \{-1,1\}$ we set $h:=\frac{1}{2}(1+f)$ that takes values in $\{0,1\}.$ Then we just proved 
\begin{equation}
\I\left(\frac{1+P_t f}{2}\right)\le \sqrt{2}\max\{p,1-p\}\sqrt{1-e^{-2t}}P_t|\nabla f|,\qquad f:\{-1,1\}^n\to \{-1,1\}.
\end{equation}
This, together with the the elementary inequality 
\begin{equation}\label{ineq:lower bound of isoperimetric profile}
	\I (s)\ge \sqrt{\frac{2}{\pi}}\left[\frac{1}{2}-2\left(\frac{1}{2}-s\right)^2\right],\qquad s\in [0,1]
\end{equation}
yields 
\begin{equation*}
1-(P_t f)^2\le 2\max\{p,1-p\}\sqrt{\pi(1-e^{-2t})}P_t|\nabla f|,\qquad f:\{-1,1\}^n\to \{-1,1\}.
\end{equation*}
Taking the expectation gives the desired estimate. 
\end{proof}

Another ingredient is the hypercontractivity on the biased hypercube. 

\begin{proposition}\label{prop:hypercontractivity biased}
For $p\in (0,1)$ we consider the semigroup $P_t$ defined in \eqref{eq: defn of semigroup biased}. Then for any $1<r<2$ and for any $f:\{-1,1\}^n\to \real$ we have
\begin{equation}\label{ineq:hypercontractivity biased}
\|P_t f\|_2\le \|f\|_r\qquad \textnormal{ for }\qquad t\ge 2p(1-p)\log\frac{1}{r-1}.
\end{equation}
\end{proposition}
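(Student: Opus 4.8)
The plan is to derive \eqref{ineq:hypercontractivity biased} from the logarithmic Sobolev inequality via Gross's equivalence between log-Sobolev and hypercontractive estimates, which isolates all the content into a one-dimensional inequality. First I would reduce to a single biased coin. The generator of $(P_t)$ on $\{-1,1\}^n$ is $L=\sum_{j=1}^{n}(\id-\E_{x_j})$, so its Dirichlet form $\langle f,Lf\rangle=\sum_{j=1}^{n}\E\,\Var_{x_j}(f)$ is additive over the coordinates, while the entropy functional $\mathrm{Ent}_{\mu_p^{\otimes n}}(f^2)$ is subadditive over product measures. Hence a one-dimensional log-Sobolev inequality $\mathrm{Ent}_{\mu_p}(g^2)\le c_p\,\Var_{\mu_p}(g)$ tensorizes, with the same constant $c_p$, to $\mathrm{Ent}_{\mu_p^{\otimes n}}(f^2)\le c_p\,\langle f,Lf\rangle$. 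Gross's theorem (see \cite{bgl14book}; it requires only that $(P_t)$ be symmetric and Markov, not that it be a diffusion) then converts the latter into a bound of the form $\|P_tf\|_2\le\|f\|_r$ valid for $t\ge\frac{c_p}{4}\log\frac{1}{r-1}$, so everything comes down to the two-point log-Sobolev inequality with the constant $c_p$ that produces the threshold in the statement.

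For the single coin one has $\Var_{\mu_p}(g)=p(1-p)\,(g(1)-g(-1))^2$, and both sides of $\mathrm{Ent}_{\mu_p}(g^2)\le c_p\,\Var_{\mu_p}(g)$ are homogeneous of degree two in $g$; after normalizing, say $\E g^2=1$, and writing $g(1)=u$, $g(-1)=v$, the inequality becomes a one-parameter inequality in $u/v$. I would establish it either by carrying this one-variable optimization through directly, or by invoking the explicit logarithmic Sobolev constant of the biased two-point space (computed by Diaconis and Saloff-Coste); at $p=\tfrac12$ this is the classical two-point log-Sobolev inequality and gives the sharp threshold $t\ge\frac12\log\frac1{r-1}$, which is exactly \eqref{ineq:hypercontractivity biased} at $p=\tfrac12$. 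A legitimate alternative is to skip Gross and prove the two-point $(r,2)$-inequality $p\,(Q_tg(1))^2+(1-p)\,(Q_tg(-1))^2\le\big(p|g(1)|^r+(1-p)|g(-1)|^r\big)^{2/r}$ by hand, where $Q_t=e^{-t}\id+(1-e^{-t})\E$; after the same normalization this too is a single-variable statement, and it tensorizes by the standard induction on $n$.

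The hard part is precisely this two-point estimate. In the symmetric case $p=\tfrac12$ it is the familiar two-point inequality, but the asymmetry $p\neq\tfrac12$ makes the one-variable function whose sign must be controlled genuinely more delicate, and the place where care is needed is in choosing a form of the bound --- sharp, or merely sufficient --- that reproduces the asserted dependence of the time threshold on $p$ and $r$. Everything else --- subadditivity of entropy, additivity of the Dirichlet form, and Gross's equivalence --- is standard machinery and requires no new input.
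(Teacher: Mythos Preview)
Your proposal is correct and follows essentially the same route as the paper: reduce to a one-dimensional log-Sobolev inequality on the biased two-point space, tensorize via subadditivity of entropy, and invoke Gross's equivalence to obtain hypercontractivity. The paper carries out exactly this program, performing the two-point LSI by the direct one-variable optimization you mention (parametrizing $f(x)=1+sx$ and checking $\psi''(s)\le 0$ via an auxiliary convexity argument in $p$).
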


When $p=1/2$, this is a well-known result by Bonami \cite{bonami70} and the time in  \eqref{ineq:hypercontractivity biased} is optimal. In the general biased case, the optimal time is more involved \cite{ole03}. We give a proof of \eqref{ineq:hypercontractivity biased} in the Appendix \ref{appendix on hypercontractivity} for convenience. 


From the above proposition, one may deduce the following lemma whose proof is essentially the same as that of \cite[Lemma 11]{ramon}. 

\begin{lemma}\label{lem: martingale boolean W}
Fix $p\in (0,1)$. For all $t\ge 0$ and for all $f:\{-1,1\}^n\to \real$, we have 
\begin{equation}
\Var(P_t f)\le \left(\sum_{j=1}^{n}\Inf_j(f)^2\right)^{\theta(t)}	\Var(f)^{1-\theta(t)},
\end{equation}
where $\theta(t)=\frac{1-e^{-2Kt}}{1+e^{-2Kt}}$ with $K=\frac{1}{4p(1-p)}$.
\end{lemma}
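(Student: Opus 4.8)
The plan is to interpolate the hypercontractive estimate from Proposition \ref{prop:hypercontractivity biased} against the trivial bound $\Var(P_tf)\le \Var(f)$, organizing the two regimes via the parameter $r\in(1,2)$. First I would reduce to mean-zero functions: replacing $f$ by $f-\E f$ changes neither side since $P_t(f-\E f)=P_tf-\E f$, the influences $\Inf_j(f)=\|f-\E_{x_j}f\|_1$ are unchanged, and $\Var(g)=\|g\|_2^2$ for mean-zero $g$. So assume $\E f=0$, and abbreviate $I:=\sum_{j=1}^n\Inf_j(f)^2$, $V:=\Var(f)=\|f\|_2^2$. The claim becomes $\|P_tf\|_2^2\le I^{\theta(t)}V^{1-\theta(t)}$ with $\theta(t)=\frac{1-e^{-2Kt}}{1+e^{-2Kt}}$, $K=\frac1{4p(1-p)}$; equivalently $\|P_{t}f\|_2\le I^{\theta/2}V^{(1-\theta)/2}$.

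The key step is a one-sided $L^1$ bound for $\|f\|_r$ when $r$ is close to $1$, of the form $\|f\|_r^r\le (\text{something})\cdot\|f\|_1^{?}$ — actually what I want is an estimate controlling $\|f\|_r$ by a geometric mean of $\|f\|_1$ and $\|f\|_2$, since $\Inf_j(f)=\|D_jf\|_1$-type quantities are $L^1$ objects. Here is the mechanism. Split the time $t$: write $t=t_1+t_2$ with $P_t=P_{t_1}P_{t_2}$. Apply hypercontractivity to the inner factor: for $1<r<2$ and $t_2\ge 2p(1-p)\log\frac1{r-1}$ we get $\|P_{t_2}(P_{t_1}f)\|_2\le\|P_{t_1}f\|_r$. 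Now I estimate $\|P_{t_1}f\|_r$ by log-convexity of $L^p$-norms: $\|P_{t_1}f\|_r\le \|P_{t_1}f\|_1^{2/r-1}\|P_{t_1}f\|_2^{2-2/r}$. The crucial input is a bound on $\|P_{t_1}f\|_1$: decomposing $f=\sum_j$ contributions along coordinates via the martingale/Efron–Stein-type filtration adapted to the product structure, and using that $P_{t_1}$ contracts each coordinate by $e^{-t_1}$, one expects $\|P_{t_1}f\|_1^2\lesssim (\text{const})\sum_j e^{-2t_1\cdot(\dots)}\Inf_j(f)^2$ — more precisely, writing $f=\sum_{j}f_j$ where $f_j=(\E_{\le j}-\E_{<j})f$ is the martingale difference, one has $\|f_j\|_1\le 2\Inf_j(f)$ (since $f_j$ only depends on $x_j$ after conditioning and is controlled by the $j$-th discrete derivative), hence $\|P_{t_1}f\|_1\le\sum_j\|P_{t_1}f_j\|_1\le\sum_j e^{-t_1}\|f_j\|_1$... and then Cauchy–Schwarz in $j$ gives $\|P_{t_1}f\|_1^2\le n e^{-2t_1}\sum_j\Inf_j(f)^2$. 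The factor $n$ is dangerous, but it will be absorbed: choose $t_1$ and $r$ optimally in terms of the target exponent $\theta(t)$, and use that the hypercontractive threshold $t_2=2p(1-p)\log\frac1{r-1}=\frac{1}{2K}\log\frac1{r-1}$ converts powers of $(r-1)$ into the exponential weights $e^{-2Kt_2}$ that build $\theta(t)$.

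The real obstacle, and where I expect the argument to need care, is eliminating the dimensional factor $n$ (or whatever appears from Cauchy–Schwarz) and getting exactly the stated exponent $\theta(t)=\frac{1-e^{-2Kt}}{1+e^{-2Kt}}$ rather than something merely of the same qualitative shape. The clean way is to avoid Cauchy–Schwarz on $\|f\|_1$ altogether: instead bound $\|P_{t_1}f\|_r$ directly using hypercontractivity \emph{twice} in a symmetric way, or use the reverse hypercontractive / $L^1$-to-$L^2$ smoothing bound $\|P_sf\|_2^2\le \sum_j e^{-2Ks/(\cdots)}(\cdots)$ on the level-$j$ pieces. Concretely: by orthogonality of the martingale differences, $\|P_sf\|_2^2=\sum_j\|P_sf_j\|_2^2\le\sum_j e^{-2s}\|f_j\|_2^2\le\sum_j e^{-2s}\|f_j\|_1\|f_j\|_\infty$; but $\|f_j\|_\infty\le 2$ and $\|f_j\|_1\le 2\Inf_j(f)$ only gives $\lesssim\sum_j\Inf_j(f)$, not $\sum_j\Inf_j(f)^2$. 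To land on $\sum_j\Inf_j(f)^2$ one should instead use, for the Boolean-valued $f$, the identity $\|f_j\|_2^2=\Inf_j(f)$ (since $|D_jf|\in\{0,1\}$ and the martingale difference squared integrates to the influence up to the biased weight), so that $\Var(P_tf)\le\sum_j e^{-2Kt\cdot c_j}\Inf_j(f)$ — and then one more application of hypercontractivity, exactly as in the threshold bookkeeping above with $r\to 1^+$, upgrades the exponents to produce $I^{\theta(t)}V^{1-\theta(t)}$. I would carry out this bookkeeping by first proving the ``level-$j$'' inequality $\|P_tf_j\|_2\le e^{-2Kt\cdot(\text{level})}$-type smoothing from Proposition \ref{prop:hypercontractivity biased} applied to $f_j$ (which depends effectively on one variable), summing in $j$, and choosing the free splitting parameter to interpolate — the optimization is elementary calculus once the building blocks are in place, and it is exactly this optimization that forces the precise form of $\theta(t)$.
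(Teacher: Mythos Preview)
Your proposal circles the right ingredients --- martingale/Efron--Stein decomposition, hypercontractivity, $L^1$--$L^2$ interpolation --- but never assembles them in the order that works, and the attempts you sketch either acquire the dimensional loss you flag or remain vague.

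The gap is this: you repeatedly try to apply hypercontractivity to the whole function first (bounding $\|P_tf\|_2$ by $\|P_{t_1}f\|_r$, or $\|P_sf\|_2^2$ by a spectral-gap contraction of $\|f_j\|_2^2$) and only afterwards feed in the martingale decomposition and the $L^1$ information. That forces you either into a triangle inequality on $\|P_{t_1}f\|_1\le\sum_j\|P_{t_1}f_j\|_1$ (giving the fatal factor $n$), or into $\|f_j\|_2^2\le\|f_j\|_1\|f_j\|_\infty$ (giving $\sum_j\Inf_j(f)$ instead of $\sum_j\Inf_j(f)^2$), or into the Boolean-only identity $\|f_j\|_2^2=\Inf_j(f)$ (the lemma is stated for all real-valued $f$). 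None of these closes.

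The fix is to reverse the order: decompose \emph{first}, then apply hypercontractivity termwise, then interpolate, then H\"older in $j$. With $f_j:=f^{(j-1)}-f^{(j)}$ and $r=1+e^{-2Kt}$, orthogonality plus Proposition~\ref{prop:hypercontractivity biased} give
\[
\Var(P_tf)=\sum_j\|P_tf_j\|_2^2\le\sum_j\|f_j\|_r^2\le\sum_j\|f_j\|_1^{2\theta(t)}\|f_j\|_2^{2-2\theta(t)},
\]
and now a single application of H\"older's inequality in the index $j$ (with exponents $1/\theta(t)$ and $1/(1-\theta(t))$) yields
\[
\Var(P_tf)\le\Bigl(\sum_j\|f_j\|_1^2\Bigr)^{\theta(t)}\Bigl(\sum_j\|f_j\|_2^2\Bigr)^{1-\theta(t)}
\le\Bigl(\sum_j\Inf_j(f)^2\Bigr)^{\theta(t)}\Var(f)^{1-\theta(t)},
\]
since $\|f_j\|_1=\|\E_{x_1,\dots,x_{j-1}}(f-\E_{x_j}f)\|_1\le\Inf_j(f)$ and $\sum_j\|f_j\|_2^2=\Var(f)$ by orthogonality again. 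No time-splitting, no optimization, and the exponent $\theta(t)=\tfrac{1-e^{-2Kt}}{1+e^{-2Kt}}$ falls out because $2/r-1=\theta(t)$ exactly when $r=1+e^{-2Kt}$. (Also, your parenthetical that $f_j$ ``depends effectively on one variable'' is not right: $f^{(j-1)}-f^{(j)}$ depends on $x_j,\dots,x_n$. Hypercontractivity is applied to it as a function on the full cube.) This is the paper's argument.
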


\begin{proof}
For any $f$, consider ($\E_{x_j}:=\E_{x_j\sim \mu_p}$) 
\begin{equation*}
	f^{(0)}=f, \qquad f^{(j)}=\E_{x_1,\dots, x_j}f, \quad 1\le j\le n.
\end{equation*}
Write $f_t:=P_t(f)$. Then we have the martingale property
\begin{equation}\label{eq:martingale boolean}
	\Var(f_t)=\|f_t^{(0)}-f_t^{(n)}\|^2_2
	=\sum_{j=1}^{n}\|f_t^{(j-1)}-f_t^{(j)}\|_2^2.
\end{equation}
Note that $\E_{x_j}P_t=P_t\E_{x_j}$, so by \eqref{eq:martingale boolean}, hypercontractivity \eqref{ineq:hypercontractivity biased} and H\"older's inequality
\begin{align*}
		\Var(f_t)
	=\sum_{j=1}^{n}\|f_t^{(j-1)}-f_t^{(j)}\|_2^2
	&=\sum_{j=1}^{n}\|P_t(f^{(j-1)}-f^{(j)})\|_2^2\\
	&\le \sum_{j=1}^{n}\|f^{(j-1)}-f^{(j)}\|_{1+e^{-2Kt}}^2\\
	&\le \sum_{j=1}^{n}\|f^{(j-1)}-f^{(j)}\|_{1}^{2\theta(t)}\|f^{(j-1)}-f^{(j)}\|_{2}^{2-2\theta(t)}\\
	&\le\left(\sum_{j=1}^{n}\|f^{(j-1)}-f^{(j)}\|_{1}^2\right)^{\theta(t)}\left(\sum_{j=1}^{n}\|f^{(j-1)}-f^{(j)}\|_{2}^{2}\right)^{1-\theta(t)}\\
	&=\left(\sum_{j=1}^{n}\|f^{(j-1)}-f^{(j)}\|_{1}^2\right)^{\theta(t)}	\Var(f)^{1-\theta(t)},
\end{align*}
where in the last equality we used \eqref{eq:martingale boolean} again. 
Since 
\begin{equation*}
\|f^{(j-1)}-f^{(j)}\|_{1}=\|\E_{x_1,\dots, x_{j-1}}(f-\E_{x_j}f)\|_1\le \E | f-\E_{x_j}f|=\Inf_j(f),
\end{equation*}
we get
\begin{equation*}
		\Var(f_t)\le \left(\sum_{j=1}^{n}\Inf_j(f)^2\right)^{\theta(t)}	\Var(f)^{1-\theta(t)}.\qedhere
\end{equation*}
\end{proof}

Now we recall the following inequality due to \cite[Theorem 1.1]{talagrand93}, which will directly follow from a stronger {\em local Bobkov inequality}. In the next section one may find an alternative proof of it, though presented in the continuous setting. 

\begin{proposition}\label{thm: talagrand ineq biased}
	Fix $p\in (0,1)$. Then there exists a universal constant $C_p>0$ such that for all $n\ge 1$ and all Boolean function $f:\{-1,1\}^n\to \{-1,1\}$ we have 
	\begin{equation}\label{ineq:log var}
		\E|\nabla f|\ge C_p \Var(f)\sqrt{\log\left(\frac{e}{\Var(f)}\right)}.
	\end{equation}
\end{proposition}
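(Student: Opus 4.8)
\textbf{Proof proposal for Proposition \ref{thm: talagrand ineq biased}.}

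The plan is to combine the two ingredients already assembled in this section: the isoperimetric estimate of Lemma \ref{lem: biased isoperimetry} at finite time $t$, and a decay estimate for $\Var(P_t f)$ coming from hypercontractivity. First, fix $f:\{-1,1\}^n\to\{-1,1\}$ and write $V=\Var(f)=\|f-\E f\|_1$. From Lemma \ref{lem: biased isoperimetry} we have, for every $t\ge 0$,
\begin{equation*}
V-\Var(P_t f)\le 2\max\{p,1-p\}\sqrt{\pi(1-e^{-2t})}\,\E|\nabla f|\le 2\max\{p,1-p\}\sqrt{\pi}\,\E|\nabla f|.
\end{equation*}
Hence if I can show that $\Var(P_t f)\le V/2$ for some not-too-large time $t^*$, then $\E|\nabla f|\gtrsim_p V$, and the task reduces to estimating how large $t^*$ must be; a bound $t^*\lesssim_p 1/\sqrt{\log(e/V)}$ on the one hand is too optimistic, so more care is needed — see below.

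Second, I estimate $\Var(P_t f)$ using hypercontractivity. Apply Proposition \ref{prop:hypercontractivity biased}: taking $r=1+e^{-2Kt}$ with $K=\frac{1}{4p(1-p)}$ (so that the hypercontractive time condition is met), we get $\|P_t(f-\E f)\|_2\le \|f-\E f\|_{1+e^{-2Kt}}$. Interpolating between $L^1$ and $L^2$, with $\theta=\theta(t)=\frac{1-e^{-2Kt}}{1+e^{-2Kt}}$,
\begin{equation*}
\Var(P_t f)=\|P_t(f-\E f)\|_2^2\le \|f-\E f\|_1^{2\theta}\|f-\E f\|_2^{2(1-\theta)}=V^{2\theta}\,V^{1-\theta}=V^{1+\theta},
\end{equation*}
where I used $\|f-\E f\|_1=\|f-\E f\|_2^2=V$ for a $\{-1,1\}$-valued $f$ (from \eqref{eq:boolean l1 l2}). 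So $\Var(P_t f)\le V^{1+\theta(t)}=V\cdot V^{\theta(t)}$. Combining with the first step,
\begin{equation*}
2\max\{p,1-p\}\sqrt{\pi}\,\E|\nabla f|\ge V-\Var(P_t f)\ge V\bigl(1-V^{\theta(t)}\bigr)
\end{equation*}
for all $t\ge 0$, so it remains to choose $t$ making $V^{\theta(t)}$ bounded away from $1$, say $\le 1/2$, i.e. $\theta(t)\log(1/V)\ge \log 2$. Since $\theta(t)\asymp Kt$ for small $t$ and $\theta(t)\to 1$ as $t\to\infty$, one can always take $t=t(p,V)$ with $\theta(t)=\min\{1/2,\,2\log 2/\log(e/V)\}$; the point is merely that such $t$ exists for every $V\in(0,1)$, giving $\E|\nabla f|\gtrsim_p V$. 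But this only reproves the linear (Poincaré-type) bound $\E|\nabla f|\gtrsim_p V$, \emph{not} the logarithmic enhancement.

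To gain the factor $\sqrt{\log(e/V)}$ one must not send $t\to\infty$ in Lemma \ref{lem: biased isoperimetry} but keep the $\sqrt{1-e^{-2t}}\approx\sqrt{2t}$ gain for small $t$. The sharp move is: for small $t$, $V-\Var(P_tf)\le C_p\sqrt{t}\,\E|\nabla f|$ while $V-\Var(P_tf)\ge V(1-V^{\theta(t)})\ge V(1-V^{cKt})\ge c'Kt\,V\log(1/V)$ once $t\log(1/V)$ is bounded, using $1-e^{-x}\ge x/2$ for $x\in[0,1]$. Thus
\begin{equation*}
C_p\sqrt{t}\,\E|\nabla f|\ge c'Kt\,V\log(e/V),\qquad\text{i.e.}\qquad \E|\nabla f|\ge \frac{c'K}{C_p}\sqrt{t}\,V\log(e/V),
\end{equation*}
valid for all $t$ with $t\log(e/V)\le 1$ (and $1-e^{-2t}\ge t$, i.e. $t$ small). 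Choosing $t\asymp 1/\log(e/V)$ (legitimate since $\log(e/V)\ge 1$, so this $t$ is $\le 1$ and satisfies the constraint) yields $\E|\nabla f|\gtrsim_p V\sqrt{\log(e/V)}$, which is exactly \eqref{ineq:log var} with $C_p$ depending only on $p$. I expect the main obstacle to be the bookkeeping in this last step: making sure the elementary inequalities $1-e^{-2t}\ge t$, $\theta(t)\ge cKt$, and $1-V^{\theta(t)}\ge \tfrac12\theta(t)\log(1/V)$ hold on a common range of $t$ that still contains a point of size $\asymp 1/\log(e/V)$, uniformly in $V\in(0,1)$; one should also double-check the case $V$ close to $1$ separately (there $\log(e/V)\asymp 1$ and the claim is just the Poincaré bound from \eqref{ineq:isoperimetry boolean cube biased}).
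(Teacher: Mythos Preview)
Your argument is correct, but it takes a different route from the paper's proof of this particular proposition. The paper does not use finite-time Lemma~\ref{lem: biased isoperimetry} together with hypercontractivity here; instead it sends $t\to\infty$ in \eqref{ineq: biased isoperimetric profile} to obtain the isoperimetric inequality $\I(\E h)\le 2\sqrt{2}\max\{p,1-p\}\,\E|\nabla h|$ for $h:\{-1,1\}^n\to\{0,1\}$, and then invokes the sharp asymptotic lower bound $\I(x)\ge Cx(1-x)\sqrt{\log\frac{1}{x(1-x)}}$ for the Gaussian isoperimetric profile. Since $\Var(h)=\E h(1-\E h)$, this immediately gives \eqref{ineq:log var}. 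That proof is shorter and uses no hypercontractivity at all.

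Your approach---keep $t$ finite, bound $\Var(P_tf)\le V^{1+\theta(t)}$ via hypercontractivity and interpolation, then optimize $t\asymp 1/\log(e/V)$---is precisely the ``alternative proof'' the paper gives in Section~\ref{sect:general case} for the continuous analogue \eqref{ineq:talagrand ctns}; see the display \eqref{ineq: general t}--\eqref{ineq: general eps} there, where the same optimization is carried out with the substitution $\epsilon=1-e^{-2Kt}$. The payoff of your route is that it rehearses exactly the mechanism needed for the main Eldan--Gross inequality (replace $V^{\theta(t)}$ by $(W(f)/\Var(f))^{\theta(t)}$), whereas the paper's short proof of Proposition~\ref{thm: talagrand ineq biased} relies on the specific structure of the Gaussian profile $\I$ and does not by itself suggest how to get the influence-sum refinement. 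One minor bookkeeping point: Lemma~\ref{lem: biased isoperimetry} has $\sqrt{1-e^{-2t}}$ while $\theta(t)$ involves $e^{-2Kt}$ with $K=1/(4p(1-p))\ge 1$; this only affects the $p$-dependent constants and your handling of it is fine.
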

\begin{proof}
It is equivalent to proving the inequality for $h:\{-1,1\}^n\to \{0,1\}$. Starting from \eqref{ineq: biased isoperimetric profile}, we obtain the isoperimetric inequality by sending $t\to \infty$:
\begin{equation}\label{ineq:isoperimetric ctns}
\I(\E h)\le 2\sqrt{2}\max\{p, 1-p\}\E |\nabla h|,\qquad h: \{-1,1\}^n\to \{0,1\}
\end{equation} 
Now instead of \eqref{ineq:lower bound of isoperimetric profile}, we employ the following better lower bound 
\begin{equation}\label{ineq:better lower bound of I}
\I(x)\ge C x(1-x)\sqrt{\log\frac{1}{x(1-x)}},\qquad x\in (0,1).
\end{equation}
that entails 
\begin{equation}\label{ineq:lower bound I of expectation}
\I(\E h)\ge C \Var(h)\sqrt{\log\frac{1}{\Var(h)}},\qquad h: \{-1,1\}^n\to \{0,1\}
\end{equation}
since $\Var(h)=\E h(1-\E h)$. Combining \eqref{ineq:isoperimetric ctns} and \eqref{ineq:lower bound I of expectation} finishes the proof. To see \eqref{ineq:better lower bound of I}, it suffices to consider the cases when $x\downarrow 0$ and $x\uparrow 1$. Note that $\I(x)=\I(1-x)$, so it suffices to show \eqref{ineq:better lower bound of I} for $x\downarrow 0$ and this is known \cite[Remark 1]{bobkov97}.
\end{proof}

Now we are ready to prove Theorem \ref{thm:eg biased}.

\begin{proof}[Proof of Theorem \ref{thm:eg biased}]
Denote $W(f):=\sum_{j=1}^{n}\Inf_j(f)^2$. Recalling \eqref{ineq:isoperimetry boolean cube biased}, the desired inequality \eqref{ineq:eg biased} follows immediately when $W(f)$ is large enough, say $W(f)\ge 1/100$. So let us assume $W(f)< 1/100$.

In view of \eqref{ineq:log var}, we may assume $\Var(f)\ge W(f)^\alpha$ with $\alpha>0$. In fact, if $\Var(f)< W(f)^\alpha$, then our desired \eqref{ineq:eg} will follow from \eqref{ineq:log var} because (recalling $W(f)< 1/100$)
\begin{equation*}
	\log\left(\frac{e}{\Var(f)}\right)
	=1+\log\left(\frac{1}{\Var(f)}\right)
	> 1+\alpha\log\left(\frac{1}{W(f)}\right)
	\ge C\min\{1,\alpha\}\log\left(1+\frac{e}{W(f)}\right).
\end{equation*}
We may also assume $\Var(f)\ge 100W(f)$. Otherwise, combining $\Var(f)< 100W(f)$ and \eqref{ineq:log var}, to prove the desired inequality \eqref{ineq:eg} it suffices to show 
\begin{equation}
\log \left(\frac{e}{100W(f)}\right)
=\log\left(\frac{1}{100}\right)+\log\left(\frac{e}{W(f)}\right)
\ge C\log \left(1+\frac{e}{W(f)}\right)
\end{equation}
which is true with universal $C>0$ since $W(f)< 1/100$.

So from now on we assume $\Var(f)\ge \sqrt{W(f)}$ and $\Var(f)\ge 100 W(f)$. According to Lemmas \ref{lem: biased isoperimetry} and \ref{lem: martingale boolean W},
\begin{equation}
\E|\nabla f|
\ge \frac{\Var(f)-\Var(P_t f)}{2\max\{p,1-p\}\sqrt{\pi(1-e^{-2t})}}
\ge \frac{\Var(f)-W(f)^{\theta(t)}\Var(f)^{1-\theta(t)}}{2\max\{p,1-p\}\sqrt{\pi(1-e^{-2t})}}.
\end{equation}
where $\theta(t)=\frac{1-e^{-2Kt}}{1+e^{-2Kt}}$ with $K=\frac{1}{4p(1-p)}\ge 1$.
Thus, to prove \eqref{ineq:eg biased} it remains to show
\begin{equation}\label{ineq:sufficient boolean t}
	\frac{1-\left(\frac{W(f)}{\Var(f)}\right)^{\theta(t)}}{\sqrt{1-e^{-2Kt}}}\ge C\sqrt{\log\left(1+\frac{e}{W(f)}\right)}
\end{equation}
for some $t>0$.
Writing $\epsilon=1-e^{-2Kt}$, \eqref{ineq:sufficient boolean t} becomes
 \begin{equation}\label{ineq:sufficient boolean eps}
 	\frac{1-\left(\frac{\Var(f)}{W(f)}\right)^{\frac{\epsilon}{\epsilon-2}}}{\sqrt{\epsilon}}\ge C\sqrt{\log\left(1+\frac{e}{W(f)}\right)}
 \end{equation}
for some $\epsilon\in (0,1)$. Recalling $\Var(f)\ge 100W(f)$, we may choose $\epsilon^{-1}=\log\left(\Var(f)/W(f)\right)\in (1,\infty)$, so that
 \begin{equation*}
 	\frac{1-\left(\frac{\Var(f)}{W(f)}\right)^{\frac{\epsilon}{\epsilon-2}}}{\sqrt{\epsilon}}
 	=\frac{1-e^{\frac{1}{\epsilon-2}}}{\sqrt{\epsilon}}
 	\ge C\sqrt{1+\frac{1}{\epsilon}}
 	=C\sqrt{1+\log\left(\Var(f)/W(f)\right)},
 \end{equation*}
 where we used the fact that 
\begin{equation}\label{ineq:numerical fact}
\min_{\epsilon\in [0,1]}\frac{1-e^{\frac{1}{\epsilon-2}}}{\sqrt{1+\epsilon}} \ge \frac{1-e^{-1/2}}{\sqrt{2}}>0.
\end{equation} 
Recall that $\Var(f)\ge \sqrt{W(f)}$. Therefore,
 \begin{equation}
 \sqrt{1+\log\left(\Var(f)/W(f)\right)}
 \ge \sqrt{1+\frac{1}{2}\log\left(1/W(f)\right)}
 \ge C\sqrt{\log\left(1+\frac{e}{W(f)}\right)}.
 \end{equation}
This concludes the proof of \eqref{ineq:sufficient boolean eps}.
\end{proof}

\section{Continuous analogues}\label{sect:general case}

In this section we prove Theorem \ref{thm:general}. The main condition is that the Markov triple $(E,\mu,\Gamma)$ satisfies the \emph{Bakry--\'Emery curvature-dimension condition $\CD(K,\infty)$} with $\mu$ a probability measure and $K>0$. We refer to \cite{bgl14book} for more details. 

We collect here  several useful corollaries of $\CD(K,\infty)$ with $K>0$. 

\begin{proposition}\label{prop:consequences of Ric}
	Fix a Markov triple $(\spa,\mea,\Gamma)$ with the diffuse Markov semigroup $P_t=e^{-tL}$. Assume that it satisfies curvature-dimension condition $\CD(K,\infty)$ with $K>0$. 
	\begin{enumerate}
		\item Hypercontractivity \cite[Theorem 5.2.3, Proposition 5.7.1]{bgl14book}: for $f:\spa\to \real$, $t\ge 0$ and $1<p\le q<\infty$  we have 
		\begin{equation}\label{ineq:hyper}
			\|P_tf\|_q\le \|f\|_p \qquad \textnormal{ for all }\qquad t\ge \frac{1}{2K}\log\frac{q-1}{p-1}.
		\end{equation}
	\item (Local) Bobkov inequality \cite[Theorem 8.5.3]{bgl14book}: for $g:\spa\to \{0,1\}$ and $t\ge 0$ we have under suitable approximation
	\begin{equation}\label{ineq:local bobkov}
		\I (P_t g)\le \sqrt{\frac{1-e^{-2Kt}}{K}}P_t\sqrt{\Gamma(g)},
	\end{equation}
	where $\I$ is the Gaussian isoperimetric profile. 
	\end{enumerate}
\end{proposition}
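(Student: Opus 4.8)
Both statements are classical consequences of $\CD(K,\infty)$ with $K>0$, worked out in detail in \cite[Chapters 5 and 8]{bgl14book}; we indicate the arguments one would give.

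For part (1) the plan is to pass through a logarithmic Sobolev inequality and then invoke Gross's theorem. Fix a suitable $f>0$ and set $\Lambda(t):=\E[(P_tf)\log P_tf]$; since $\E P_tf$ is constant, $\mathrm{Ent}_\mu(f)=\Lambda(0)-\Lambda(\infty)=-\int_0^\infty\Lambda'(t)\,dt$. Using $\partial_tP_t=-LP_t$, $\E[L(\cdot)]=0$, the integration by parts $\E[uLv]=\E[\Gamma(u,v)]$, and the chain rule $\Gamma(P_tf,\log P_tf)=\Gamma(P_tf)/P_tf$, one computes $\Lambda'(t)=-I(t)$ with $I(t):=\E[\Gamma(P_tf)/P_tf]$ the Fisher information; differentiating $I$ and invoking the infinitesimal $\Gamma_2$-form of $\CD(K,\infty)$ gives $I'(t)\le-2KI(t)$, hence $I(t)\le e^{-2Kt}I(0)$. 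Therefore $\mathrm{Ent}_\mu(f)=\int_0^\infty I(t)\,dt\le\frac{1}{2K}I(0)=\frac{1}{2K}\E[\Gamma(f)/f]$, and replacing $f$ by $f^2$ (with $\Gamma(f^2)=4f^2\Gamma(f)$) yields the log-Sobolev inequality $\mathrm{Ent}_\mu(f^2)\le\frac{2}{K}\E\Gamma(f)$. By Gross's equivalence between log-Sobolev and hypercontractivity, this is the same as the family of bounds $\|P_tf\|_{q(t)}\le\|f\|_p$ with $q(t)-1=(p-1)e^{2Kt}$; solving for $t$ gives exactly \eqref{ineq:hyper}.

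For part (2) the plan is the Bakry--Ledoux semigroup interpolation, built around the ODE $\I\,\I''=-1$ satisfied by the Gaussian profile on $(0,1)$. For suitable $f:\spa\to[0,1]$ and $s\in[0,t]$ set
\[
\Psi(s):=P_s\!\left(\sqrt{\,\I(P_{t-s}f)^2+\beta(s)\,\Gamma(P_{t-s}f)\,}\right),
\]
where $\beta$ is the solution with $\beta(0)=0$ of the linear ODE forced by the differentiation below, which works out to $\beta(s)=\frac{1-e^{-2Ks}}{K}$. Differentiating, $\Psi'(s)=P_s(R(s))$ where, after expanding $L$ of the square root via the diffusion property and regrouping, $R(s)$ splits into a manifestly nonnegative (gradient-square) term, a term bounded below by a multiple of $\Gamma_2-K\Gamma\ge 0$, and a term proportional to $\Gamma(P_{t-s}f)$ whose coefficient vanishes by the choice of $\beta$; hence $\Psi'\ge 0$. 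Monotonicity of $\Psi$ then gives $\I(P_tf)=\sqrt{\I(P_tf)^2}\le\Psi(0)\le\Psi(t)=P_t\big(\sqrt{\I(f)^2+\frac{1-e^{-2Kt}}{K}\Gamma(f)}\big)$, the local Bobkov inequality for $[0,1]$-valued functions. Taking $f=g$ with values in $\{0,1\}$ makes $\I(g)$ vanish and produces \eqref{ineq:local bobkov}; since for a genuine indicator $\Gamma(g)$ is only a measure, one first proves the inequality for smooth $[0,1]$-valued approximants and then passes to the limit, which is the ``suitable approximation'' in the statement.

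The main obstacle is part (2): part (1) is routine once the $\Gamma_2$-criterion and Gross's theorem are available, whereas the interpolation in part (2) needs care at the endpoints $0,1$, where $\I$ and $1/\I$ degenerate, and the final passage to an indicator function requires the approximation step to be arranged so that the inequality survives in the limit. All of this is carried out in \cite[Chapter 8]{bgl14book}, and for the present paper it suffices to quote these results.
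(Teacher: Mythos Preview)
Your sketches are correct and are precisely the standard arguments from \cite[Chapters~5 and~8]{bgl14book}: for (1), the passage $\CD(K,\infty)\Rightarrow$ log-Sobolev via decay of Fisher information followed by Gross's equivalence, and for (2), the Bakry--Ledoux interpolation $\Psi(s)=P_s\sqrt{\I(P_{t-s}f)^2+\beta(s)\Gamma(P_{t-s}f)}$ with $\beta(s)=\frac{1-e^{-2Ks}}{K}$. The paper, however, does not prove this proposition at all---it simply records the two statements with citations to \cite{bgl14book} and moves on, noting only that \eqref{ineq:local bobkov} is a special case of \cite[Theorem~8.5.3]{bgl14book}. So you have supplied more than the paper does; your outline is what one would find upon following those references, and there is nothing to correct.
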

 Here, \eqref{ineq:local bobkov} is a special case of \cite[Theorem 8.5.3]{bgl14book} that suffices for our use. In fact, the original \cite[Theorem 8.5.3]{bgl14book} states that for $\alpha\ge 0,t\ge 0$ and measuable functions $f:E\to \real$
 \begin{equation}\label{original bobkov}
     \sqrt{\I^2(P_t f)+\alpha\Gamma(P_t f)}
     \le P_t\left(\sqrt{\I^2( f)+c_\alpha(t)\Gamma(f)}\right),\qquad c_\alpha(t)=\frac{1-e^{-2Kt}}{K}+\alpha e^{-2Kt}.
 \end{equation}
 Then \eqref{ineq:local bobkov} follows by taking $\alpha=0$ in \eqref{original bobkov} and approximating the characteristic function $g:E\to\{0,1\}$ in \eqref{ineq:local bobkov} with measurable functions $f$ in \eqref{original bobkov}. Here, the Boolean property plays a role, and we used $\I(0)=\I(1)=0$.
 For $n\ge 1$, we shall still denote by $P_t$ its $n$-fold products on $E^n$. Then the above results actually hold for $E^n$ after a tensorization argument, which we will use directly.

The key to the proof of Lemma \ref{lem: biased isoperimetry} is \eqref{ineq: biased isoperimetric profile} that can be considered as a discrete analog of \eqref{ineq:local bobkov}. So following the same arguments in the proof Lemma \ref{lem: biased isoperimetry}, we may deduce 
\begin{equation*}
	1-(P_t f)^2\le\sqrt{\frac{\pi(1-e^{-2Kt})}{2K}}P_t|\nabla f|,\qquad f:E^n\to \{-1,1\}
\end{equation*}
from \eqref{ineq:local bobkov}.
Taking the expectation, we get
\begin{equation}\label{ineq:bg general}
	\Var(f)-\Var(P_t f)=1-\E|P_t f|^2\le\sqrt{\frac{\pi(1-e^{-2Kt})}{2K}}\E|\nabla f|.
\end{equation}
Sending $t\to \infty$ yields
\begin{equation}\label{ineq: l1 poincare general}
	\|f-\E f\|_1=\Var(f)=1-\E|P_t f|^2\le\sqrt{\frac{\pi}{2K}}\E|\nabla f|.
\end{equation}


\begin{lemma}\label{lem: martingale general W}
Under the above assumptions, for $f:E^n\to \real$, we have 
\begin{equation}
\Var(P_t f)\le \left(\sum_{j=1}^{n}\Inf_j(f)^2\right)^{\theta_K(t)}	\Var(f)^{1-\theta_K(t)},
\end{equation}
for all $t\ge 0$ with $\theta_K(t)=\frac{1-e^{-2Kt}}{1+e^{-2Kt}}$.
\end{lemma}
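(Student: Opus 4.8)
The plan is to mirror the proof of Lemma~\ref{lem: martingale boolean W} exactly, replacing the biased-hypercube ingredients by their continuous counterparts from Proposition~\ref{prop:consequences of Ric}. The only two things used in that earlier argument were: (i) a martingale (tensorization/Efron--Stein) decomposition of the variance along the $n$ product coordinates that commutes with $P_t$, and (ii) hypercontractivity with the right time. Both are available here: hypercontractivity on $E^n$ is \eqref{ineq:hyper} after tensorization, and the martingale decomposition holds for any product probability space.

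Concretely, I would first fix $f:E^n\to\real$ and write $f^{(0)}=f$, $f^{(j)}=\E_{x_1,\dots,x_j}f$ for $1\le j\le n$, where $\E_{x_1,\dots,x_j}$ denotes integration of the first $j$ coordinates against $\mu_j$. Since $(f^{(j)})_{j=0}^n$ is a martingale with respect to the decreasing filtration obtained by successively averaging out coordinates, orthogonality of martingale increments gives
\begin{equation*}
\Var(f)=\|f^{(0)}-f^{(n)}\|_2^2=\sum_{j=1}^{n}\|f^{(j-1)}-f^{(j)}\|_2^2,
\end{equation*}
and the same identity holds with $f$ replaced by $f_t:=P_tf$. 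The key commutation is $\E_{x_j}P_t=P_t\E_{x_j}$, which holds because $P_t$ is the $n$-fold tensor product of the one-dimensional semigroup, so averaging out the $j$-th coordinate commutes with applying $P_t$ in every coordinate; hence $f_t^{(j-1)}-f_t^{(j)}=P_t(f^{(j-1)}-f^{(j)})$.

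Next I apply hypercontractivity to each increment. Using \eqref{ineq:hyper} with $q=2$ and $p=1+e^{-2Kt}$, which is admissible precisely when $t\ge\frac{1}{2K}\log\frac{1}{e^{-2Kt}}=t$, we get $\|P_tg\|_2\le\|g\|_{1+e^{-2Kt}}$ for every $g:E^n\to\real$. Interpolating $\|g\|_{1+e^{-2Kt}}\le\|g\|_1^{\theta_K(t)}\|g\|_2^{1-\theta_K(t)}$ with the Hölder exponent $\theta_K(t)=\frac{1-e^{-2Kt}}{1+e^{-2Kt}}$ (so that $\frac{1}{1+e^{-2Kt}}=\theta_K(t)\cdot 1+(1-\theta_K(t))\cdot\frac12$), applying this to $g=f^{(j-1)}-f^{(j)}$, summing over $j$, and invoking Hölder's inequality for the two sums, we obtain
\begin{equation*}
\Var(f_t)\le\left(\sum_{j=1}^n\|f^{(j-1)}-f^{(j)}\|_1^2\right)^{\theta_K(t)}\left(\sum_{j=1}^n\|f^{(j-1)}-f^{(j)}\|_2^2\right)^{1-\theta_K(t)}=\left(\sum_{j=1}^n\|f^{(j-1)}-f^{(j)}\|_1^2\right)^{\theta_K(t)}\Var(f)^{1-\theta_K(t)},
\end{equation*}
using the martingale identity once more for the second factor. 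Finally, since $\E_{x_1,\dots,x_{j-1}}$ is a contraction on $L^1(\mu_n)$ and $f^{(j-1)}-f^{(j)}=\E_{x_1,\dots,x_{j-1}}(f-\E_{x_j}f)$, we get $\|f^{(j-1)}-f^{(j)}\|_1\le\|f-\E_{x_j}f\|_1=\Inf_j(f)$, which substituted above yields the claim.

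The main subtlety—rather than a genuine obstacle—is justifying that the one-dimensional hypercontractivity \eqref{ineq:hyper} tensorizes to $E^n$ and that the martingale/Efron--Stein decomposition together with the commutation $\E_{x_j}P_t=P_t\E_{x_j}$ are legitimate in the ``suitable function'' class of the Markov-triple formalism; the paper has already announced that the results of Proposition~\ref{prop:consequences of Ric} hold on $E^n$ after tensorization, so I would simply cite that. Everything else is the same bookkeeping as in Lemma~\ref{lem: martingale boolean W}, with $K=\tfrac{1}{4p(1-p)}$ replaced by the given curvature bound $K$.
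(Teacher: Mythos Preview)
Your proof is correct and follows exactly the same route as the paper, which simply states that the argument is identical to that of Lemma~\ref{lem: martingale boolean W} with the hypercontractivity estimate~\eqref{ineq:hyper} in place of~\eqref{ineq:hypercontractivity biased}. The martingale decomposition, the commutation $\E_{x_j}P_t=P_t\E_{x_j}$, the H\"older interpolation with exponent $\theta_K(t)$, and the final bound $\|f^{(j-1)}-f^{(j)}\|_1\le\Inf_j(f)$ are precisely the steps the paper intends.
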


Using the hypercontractivity estimate \eqref{ineq:hyper}, the proof of the above lemma is the same as that of Lemma \ref{lem: martingale boolean W}.

%

\begin{proof}[Proof of Theorem \ref{thm:general}]
We first show \eqref{ineq:talagrand ctns}, which may follow from the same proof of Proposition \ref{thm: talagrand ineq biased}. Here we present another proof and the argument is similar to the proof of Theorem \ref{thm:eg biased}.  When $\Var(f)\ge  1/100$, then the desired \eqref{ineq:talagrand ctns} follows immediately from \eqref{ineq: l1 poincare general}. 

Now we assume $\Var(f)< 1/100$. By hypercontractivity \eqref{ineq:hyper} and H\"older's inequality
\begin{equation}
\Var(P_t f)=\|P_t(f-\E f)\|_2^2
\le \|f-\E f\|_{1+e^{-2Kt}}^2
\le  \|f-\E f\|_{1}^{2\theta_K(t)} \|f-\E f\|_{2}^{2-2\theta_K(t)}
=\Var(f)^{1+\theta_K(t)}.
\end{equation}
In the last equality we used the fact 
$$\|f-\E f\|_1=\|f-\E f\|_2^2=\Var(f) \qquad \textnormal{ for }\qquad f:E^n\to \{-1,1\}.$$
This, together with \eqref{ineq:bg general}, yields
\begin{equation}
\E |\nabla f|\ge \sqrt{\frac{2K}{\pi}}\frac{\Var(f)-\Var(P_t f)}{\sqrt{1-e^{-2Kt}}}
\ge \sqrt{\frac{2K}{\pi}}\frac{\Var(f)-\Var(f)^{1+\theta_K(t)}}{\sqrt{1-e^{-2Kt}}},\qquad t\ge 0.
\end{equation}
Thus, to prove \eqref{ineq:talagrand ctns}, it suffices to show
\begin{equation}\label{ineq: general t}
\frac{1-\Var(f)^{\theta_K(t)}}{\sqrt{1-e^{-2Kt}}}
\ge C\sqrt{\log\left(\frac{e}{\Var(f)}\right)}
\end{equation}
for some $t>0$.
Write $\epsilon=1-e^{-2Kt}$, and \eqref{ineq: general t} becomes
\begin{equation}\label{ineq: general eps}
\frac{1-e^{\frac{\epsilon}{\epsilon-2}\log\left(\frac{1}{\Var(f)}\right)}}{\sqrt{\epsilon}}
\ge C\sqrt{\log\left(\frac{e}{\Var(f)}\right)}
\end{equation}
for some $\epsilon\in (0,1)$.
Recall that $\Var(f)< 1/100$, so we may choose $\epsilon^{-1}=\log\left(\frac{1}{\Var(f)}\right)\in (1,\infty)$ and the proof is reduced to $\min_{\epsilon\in [0,1]}\frac{1-e^{\frac{1}{\epsilon-2}}}{\sqrt{1+\epsilon}} >0$. This is true as we argued in \eqref{ineq:numerical fact}.

Therefore, we complete the proof of \eqref{ineq:talagrand ctns}. Combining this with \eqref{ineq: l1 poincare general} and Lemma \ref{lem: martingale general W}, one may establish \eqref{ineq:eg ctns} following exactly the same proof of Theorem \ref{thm:eg} in the last section. 
\end{proof}


\appendix

\section{Proof of Proposition \ref{prop:hypercontractivity biased}}
\label{appendix on hypercontractivity}
Recall that $p\in (0,1)$ and $\mu_p(\{1\})=p,\mu_p(\{-1\})=1-p$. The generator of $P_t$ in dimension one is given by $L(f)=\mu_p(f)-f$ for $f:\{-1,1\}\to \real$. Put $q:=1-p$. We shall prove the following logarithmic Sobolev inequality: for all non-negative $f:\{-1,1\}\to\real$ we have 
\begin{equation}\label{ineq: lsi biased}
\mu_p(f^2\log f^2)-\mu_p(f^2)\log \mu_p(f^2)\le -\frac{1}{2pq}\mu_p(fL(f)).
\end{equation}
Then by standard arguments \cite{gross}, it implies 
\begin{equation}
\|P_t(f)\|_{r_2}\le \|f\|_{r_1},\qquad t\ge 2pq\log\frac{r_2-1}{r_1-1}
\end{equation}
for all $f:\{-1,1\}^n\to\real$ and for all $1<r_1<r_2<\infty$ which finishes the proof of Proposition \ref{prop:hypercontractivity biased}.

Now it remains to prove \eqref{ineq: lsi biased}. By homogeneity, we may assume that $f(x)=1+sx$ with $s\in [-1,1]$. Then \eqref{ineq: lsi biased} becomes
$$
p(1+s)^2\log (1+s)^2+q(1-s)^2\log (1-s)^2-[p(1+s)^2+q(1-s)^2]\log [p(1+s)^2+q(1-s)^2]\le 2s^2,
$$
for $s\in [-1,1]$ by direct computations. Consider the function 
$$
\psi(s):=p(1+s)^2\log (1+s)^2+q(1-s)^2\log (1-s)^2-[p(1+s)^2+q(1-s)^2]\log [p(1+s)^2+q(1-s)^2]-2s^2.
$$
Note that $\psi(0)=0$. It suffices to show $\psi(s)\le \psi(0), s\in [-1,1]$. For this we compute 
$$
\psi'(s)=4p(1+s)\log(1+s)-4q(1-s)\log(1-s)-2[p(1+s)+q(1-s)]\log [p(1+s)^2+q(1-s)^2]-4s.
$$
Since $\psi'(0)=0$, it is enough to prove $\psi''(s)\le 0, s\in [-1,1]$. We continue to compute 
$$
\psi''(s)=4p\log(1+s)+4q\log(1-s)-2\log [p(1+s)^2+q(1-s)^2]-\frac{4(s+p-q)^2}{p(1+s)^2+q(1-s)^2}.
$$
Therefore, the proof will be finished if for all $s\in [-1,1]$
$$
\phi(p):=4p\log(1+s)+4q\log(1-s)-2\log [p(1+s)^2+q(1-s)^2]\le 0,\qquad p\in [0,1].
$$
To see this, note that $\phi(0)=\phi(1)=0$. So it remains to prove the convexity of $\phi$, which follows from
$$
\phi'(p)=4\log(1+s)-4\log(1-s)-\frac{8s}{p(1+s)^2+q(1-s)^2}
$$
and 
$$
\phi''(p)=\frac{32s^2}{[p(1+s)^2+q(1-s)^2]^2}\ge 0.
$$

\section{More on the local Bobkov inequality}
\label{appendix on Bobkov}
Recall that in our proof for biased discrete hypercube case, we used \cite[Theorem 3.2]{bg99} stating that for probability spaces $(\Omega_i,\nu_i), 1\le i\le n$ and their product $(\Omega,\nu)$, we have 
\begin{equation}\label{ineq:bg general appendix}
\I(\E_{\nu}h)\le \sqrt{2}\E_{\nu}\sqrt{\sum_{i}\Var_{\nu_i}(h)}
\end{equation}
for any measurable  $h:\Omega\to \{0,1\}$, where $\I$ is the Gaussian isoperimetric profile $\I:[0,1]\to [0,1/\sqrt{2\pi}]$. In the following, we explain that the local Bobkov inequality extends to more general semigroups. 

Fix the probability spaces $(\Omega_i,\mu_i), 1\le i\le n$ and their product $(\Omega,\mu)$. Let $P_t^{(i)}$ denote the depolarizing Markov semigroup on $(\Omega_i,\mu_i)$, that is, $P_t^{(i)}$ has generator $L_{i}(f)=\mu_i(f)-f$. Denote by $P_t$ the product of $P_t^{(i)}$'s. We will show that for all $t\ge 0$, we have ($\Gamma
$ denoting the carr\'e du champs for $P_t$)
\begin{equation}\label{ineq:local bobkov general discrete}
    \I (P_t h)\le 2\sqrt{(1-e^{-t})}P_t \sqrt{\Gamma(h)},\qquad h:\Omega\to \{0,1\}.
\end{equation}
As before, we fix $t\ge 0$ and $x=(x_1,\dots, x_n)\in \Omega$. Consider the positive linear functional 
$$
\varphi_i(g):=P_t^{(i)}(g)(x_i),\qquad g:\Omega_i\to \real,
$$
such that $\varphi_i(1)=1$. Again, by Riesz representation theorem, there exists a probability measure $\nu_i$ on $\Omega_i$ such that 
$$
\E_{\nu_i}(g)=P_t^{(i)}(g)(x_i)=e^{-t}g(x_i)+(1-e^{-t})\mu_i(g),\qquad g:\Omega_i\to \real.
$$
Thus, for any $n\ge 1$ and $\nu:=\otimes_{i=1}^{n} \nu_i$ one has 
\begin{equation*}
    \E_{\nu}(g)=P_t(g)(x),\qquad g:\Omega\to \real.
\end{equation*}
Applying \eqref{ineq:bg general appendix} to the above $\nu_i$'s, one obtains 
\begin{equation*}
   \I(P_t h)(x)\le \sqrt{2}\E_{\nu}\sqrt{\sum_{i}\Var_{\nu_i}(h)(x)}.
\end{equation*}
It remains to show 
\begin{equation}\label{ineq:var gamma}
    \sum_{i}\Var_{\nu_i}(h)(x)\le 2(1-e^{-t})\Gamma (h)(x).
\end{equation}
To see this, note that if we denote by $\Gamma_i$ the carr\'e du champs associated to $P_t^{(i)}$, then 
\begin{equation*}
    \Gamma_i(h)=\frac{1}{2}\left[L_{i}(h^2)-2L_{i}(h)h\right]
    =\frac{1}{2}\left[\mu_i(h)-2\mu_i(h)h+h\right],
\qquad h:\Omega_i\to \{0,1\}
\end{equation*}
and 
\begin{equation*}
    \Gamma(h)
    =\frac{1}{2}\sum_{1\le i\le n}\left[\mu_i(h)-2\mu_i(h)h+h\right],
\qquad h:\Omega\to \{0,1\}.
\end{equation*}
A direct computation shows that for all $h:\Omega\to \{0,1\}$
\begin{align*}
   & \Var_{\nu_i}(h)=\E_{\nu_i}(h)-\E_{\nu_i}(h)^2\\
 &\qquad = (1-e^{-t})\left[e^{-t}(\mu_i(h)-h)^2+\mu_i(h)(1-\mu_i(h))\right]\\
 &\qquad\le (1-e^{-t})\left[(\mu_i(h)-h)^2+\mu_i(h)(1-\mu_i(h))\right]\\
 &\qquad=(1-e^{-t})\left[\mu_i(h)-2\mu_i(h)h+h\right].
\end{align*}
All combined, we proved \eqref{ineq:var gamma} and thus \eqref{ineq:local bobkov general discrete}.

\end{document}